\newtheorem*{rep@theorem}{\rep@title}
\newcommand{\newreptheorem}[2]{%
\newenvironment{rep#1}[1]{%
 \def\rep@title{#2 \ref{##1}}%
 \begin{rep@theorem}}%
 {\end{rep@theorem}}}
\theoremstyle{plain}
\newtheorem{thm}{Theorem}[section]
\newtheorem{prop}[thm]{Proposition}
\newtheorem{lem}[thm]{Lemma}
\newtheorem{cor}[thm]{Corollary}
\theoremstyle{remark}
\theoremstyle{definition}
\newtheorem{rem}[thm]{Remark}
\newtheorem{dfn}[thm]{Definition}
\newcommand{\N}{\mathbb{N}}
\newcommand{\Z}{\mathbb{Z}}
\newcommand{\A}{\mathbb{A}}
\newcommand{\K}{\mathcal{K}}
\newcommand{\T}{\mathbb{T}}
\newcommand{\p}{\operatorname{par}}
\newcommand{\im}{\operatorname{im}}
\newcommand{\E}{\mathcal{E}}
\newcommand{\into}{\hookrightarrow}
\newcommand{\F}{\mathcal{F}}
\newcommand{\D}{\mathfrak{d}}
\renewcommand{\O}{\mathcal{O}}
\newcommand{\id}{\mathbbm{1}}
\newcommand{\Aut}{\text{Aut}}
\newcommand{\Span}{\operatorname{span}}
\definecolor{purple}{rgb}{.54,0,.54}
\definecolor{green}{HTML}{228B22}
\title{Refinement of Higher-Rank Graph Reduction}
\author{S. Joseph Lippert}
\date{}
\begin{document}
	
		\maketitle
		
		\begin{abstract}
		    Given a row-finite, source-free, graph of rank k, we extend the definition of reduction introduced by Eckhardt et al. This constitutes a large step forward in the extension of the geometric classification of finite directed graph $C^*$-algebras presented by Eilers et al. to higher-rank graph $C^*$-algebras. This new move acts as an inverse to delay, directly extends the previous version, and provides previously undocumented Morita classes of k-graphs. In pursuit of this extension, we formalize what constitutes a \textit{higher-rank graph move}. Specifically, we use this formalization as a bridge between the new geometric reasoning and the classical category theoretic construction. 
		\end{abstract}

        \section{Introduction}
            
            Recently, there have been an exciting number of advancements in the field of $C^*$-algebra classification by $K$-theoretic invariants. Specifically, the \textit{Elliott invariant} has proven to be necessary and sufficient for classification of simple unital $C^*$-algebras which have finite nuclear dimension and satisfy the Universal Coefficient Theorem from \cite{rose}. That is, given two such algebras $\A_1$ and $\A_2$, we know $\A_1\cong \A_2$ if and only if their Elliott  invariants are isomorphic \cite{quasidiag,finite-simple-amen-1,finite-simple-amen-2}. An impressive amount of effort has already been put towards extending this result to the non-unital case \cite{classification-KK-cont}. 
            
            Much effort has also been put into classifying $C^*$-algebras that are not simple. The most straightforward of these examples would be the Cuntz-Krieger algebras $\O_A$. If $A$ is not irreducible, then $\O_A$ is not a simple algebra \cite{cuntz}.  However, $\O_A$ can be classified by ordered filtered $K$-theory \cite{compclass}. The Cuntz-Krieger algebras are closely related to graph $C^*$-algebras. Because of this relationship, researchers have been able to interpret these classification results in a purely geometric context.
            
            In particular, the work done by Eilers, Restorff, Ruiz, and S{\o}rensen \cite{compclass} determined a complete list of $6$ Morita equivalence preserving graph moves. We call a move, \textbf{(M)}, taking $E$ to $E_M$, Morita equivalence preserving if $C^*(E_M)\sim_{ME} C^*(E)$. Further, given two graphs $E$ and $F$ such that $C^*(E)\sim_{ME} C^*(F)$  there exists a finite sequence of these Morita equivalence preserving moves and their inverses converting $E$ into $F$. In effect, \cite{compclass} showed that the list of $6$ moves completely classified graph $C^*$-algebras up to Morita equivalence.
            
            This result was undoubtedly a huge step forward for the field, but there are a number of $C^*$-algebras that cannot be realized as $C^*(E)$ for some $E$. In particular, $C^*$-algebras with torsion in their $K_1$ group cannot be expressed in this way (cf-\cite[Equation 3.3]{rae}). This was one of the motivations for the introduction of higher-rank graphs (sometimes called $k$-graphs) and their $C^*$-algebras by Kumjian and Pask \cite{kp}. These are natural analogues of graph $C^*$-algebras, but they have been shown to include some $C^*$-algebras with torsion in their $K_1$ group \cite{evans08}. Additionally, higher-rank graph $C^*$-algebras were crucial to the proof  that every UCT Kirchberg algebra has nuclear dimension 1 \cite{ruiz-sims-sorensen}. They relate to solutions of the Yang-Baxter equations \cite{yang, vdovina-DM-solns} and have provided key examples for noncommutative geometry \cite{pask-rennie-sims, hajac-sims}.
            
            Because of the importance of these higher-rank graph $C^*$-algebras, it is sensible to try and generalize the result from \cite{compclass}. Eckhardt, Fieldhouse, Gent, Gillaspy, Gonzales, and Pask  began the project of generalizing the moves of Eilers et al. to the realm of $k$-graphs \cite{efgggp}. This paper introduced four $k$-graph moves that preserve Morita equivalence (insplitting, delay, sink deletion,  and reduction). The move delay was first introduced by Bates and Pask \cite{delay} for directed graphs.  However, in \cite{compclass} it was observed that up to Morita equivalence reduction is a left inverse of delay and only one of those moves was necessary for complete classification of graph $C^*$-algebras. Ben Listhartke, in his PhD thesis work, continued with the project and extended the move outsplitting to $k$-graphs \cite{outsplit}. 

            This ground breaking work left refinements and open questions for the community. This paper is meant to address two. Firstly, this paper will introduce a general definition of a higher-rank graph move. Since it is relatively easy to change a directed graph and still obtain a well defined directed graph, such formal definitions were not necessary for \cite{compclass} and could be sidestepped with an ad hoc approach in \cite{efgggp}. A similar issue was tackled in the early days of this field by \cite{quasimorph1} with their introduction of $k$-morphs. Notably, $k$-morphs can be used to change a graph of rank $k$ by increasing the rank to $k+1$. This is markedly different from the geometric changes introduced in \cite{efgggp}. For this reason we create a definition of higher-rank graph move that encompasses $k$-morphs as well as those introduced by Eckhardt et al. 
            
            Secondly, this paper refines the move reduction, \textbf{(R)}, to rely on weaker hypotheses. We show that this is a well defined higher-rank graph move preserving Morita equivalence. We show that \textbf{(R)} respects the inherent monoidal category structures of higher-rank graphs and $C^*$-algebras. We then show that \textbf{(R)} is a left inverse to delay, \textbf{(D)}, just as it was in the directed graph case.  

            The structure of the paper is as follows. In Section 3, we define a higher-rank graph move. Roughly speaking this is a way of associating a pair $(\Lambda,w)$, $\Lambda$ a higher-rank graph and $w$ a vertex in $\Lambda$, satisfying a set of hypotheses, $H_M$, with a higher-rank graph $\Lambda_M$ such that there is a ``memory" of $\Lambda$. Section 4 discusses how we may construct a set of hypotheses for \textbf{(R)} in a way that will preserve the tensor product structure inherent to higher-rank graphs and $C^*$-algebras. In particular, we define neighborhoods about a vertex and incorporate them into our hypotheses. Section 5 defines the move \textbf{(R)} and demonstrates that it is well defined, preserves Morita equivalences, and geometrically encompasses the Morita equivalences implied by the monoidal category structure of $C^*$-algebras. Section 6 discusses in detail \textbf{(R)}'s relation to the moves that inspired it \textbf{(CR)} and \textbf{(D)}.
            
            It is the hope of the author that the softening of hypotheses outlined in this paper can be applied to improve the moves insplit and outsplit. These changes will hopefully pave the way to extending the Cuntz splice to higher-rank graphs.
            
            \textbf{Acknowledgements:} The author would like to thank his PhD advisor, Elizabeth Gillaspy, for many helpful discussions and invaluable guidance. He would also like to thank Luke Davis for bringing this problem to his attention. Lastly the author would like to thank the annonymous referee whose comments inspired Section \ref{moves} which added necessary coherence to the paper as a whole. This research was partially supported by NSF grant DMS-1800749 to Elizabeth Gillaspy.

        \section{Preliminaries}
        
        \textbf{Notation:} For this paper, we take $0\in \N$. We view $\N^k$ as a category with composition of morphisms given by addition. Note that the category is Abelian with a single object denoted $0$ or $(0,\dots,0)$. Additionally, we will refer to the standard basis of $\N^k$ as $E:=\{e_1,\dots,e_k\}$ (when necessary we will use $E_\ell$ as the standard basis of $\N^\ell$). Given a directed graph $(G^0,G^1,s_G,r_G)$, we denote the path category $G^*$. Additionally, all path composition will be carried out from right to left.
        
        \begin{dfn}
            \cite[Definitions 1.1]{kp}  Let $\Lambda$ be a countable category and $d:\Lambda \to \N^k$ a functor. If $(\Lambda, d)$ satisfies the \textit{factorization property}- that is, for every morphism $\lambda \in \Lambda$ and $n,m\in \N^k$ such that $d(\lambda)=m+n$, there exist unique $\mu,\nu\in \Lambda$ such that $d(\mu)=m$, $d(\nu)=n$, and $\lambda = \mu\nu$- then $(\Lambda,d)$ is a \textit{$k$-graph} (or graph of rank $k$).
        \end{dfn}
        
        A helpful source of examples is product graphs. 
        
        \begin{prop}
        \label{prod}
        \cite[Proposition 1.8]{kp} Let $\Lambda_1$ and $\Lambda_2$ be rank $k$ and $\ell$ graphs respectively. Then the product category $\Lambda_1\times \Lambda_2$ with the product functor $d_1\times d_2(\lambda_1,\lambda_2)=(d_1(\lambda_1),d_2(\lambda_2))$ is a $k+\ell$-graph.
        \end{prop}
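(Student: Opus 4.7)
The plan is to verify the three requirements for $(\Lambda_1\times\Lambda_2, d_1\times d_2)$ to be a $(k+\ell)$-graph: that it is a countable category, that $d_1\times d_2$ is a functor into $\N^{k+\ell}$, and that the factorization property holds. The first is immediate since the product of two countable categories is again a countable category (objects and morphisms are pairs of objects and morphisms respectively). The second follows because the projections and their induced functors compose with $d_1$ and $d_2$ respectively, so $d_1\times d_2$ preserves composition and identities; here I would use the canonical identification of $\N^k\times \N^\ell$ with $\N^{k+\ell}$ as Abelian monoids with a single object, under which $(m_1,m_2)+(n_1,n_2)=(m_1+n_1,m_2+n_2)$.

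The heart of the argument is the factorization property, which I would prove by applying the factorization property of $\Lambda_1$ and $\Lambda_2$ coordinatewise. Given $(\lambda_1,\lambda_2)\in \Lambda_1\times \Lambda_2$ and $(a_1,a_2),(b_1,b_2)\in \N^{k+\ell}$ with $(d_1(\lambda_1),d_2(\lambda_2))=(a_1+b_1, a_2+b_2)$, one obtains two scalar equations $d_1(\lambda_1)=a_1+b_1$ in $\N^k$ and $d_2(\lambda_2)=a_2+b_2$ in $\N^\ell$. The factorization property of $\Lambda_i$ then produces unique $\mu_i,\nu_i$ with $d_i(\mu_i)=a_i$, $d_i(\nu_i)=b_i$, and $\mu_i\nu_i=\lambda_i$. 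Setting $\mu=(\mu_1,\mu_2)$ and $\nu=(\nu_1,\nu_2)$ gives $\mu\nu=(\lambda_1,\lambda_2)$ with the required degrees, establishing existence.

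For uniqueness, I would suppose $(\mu_1',\mu_2')(\nu_1',\nu_2')=(\lambda_1,\lambda_2)$ is another such factorization with the prescribed degrees. Projecting onto each factor yields factorizations $\mu_i'\nu_i'=\lambda_i$ in $\Lambda_i$ with $d_i(\mu_i')=a_i$ and $d_i(\nu_i')=b_i$; the uniqueness clauses from the factorization properties of $\Lambda_1$ and $\Lambda_2$ then force $\mu_i'=\mu_i$ and $\nu_i'=\nu_i$, so the two factorizations agree.

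I do not anticipate a genuine obstacle here: the proof is a bookkeeping exercise in which the $k$ and $\ell$ coordinates decouple perfectly because addition in $\N^{k+\ell}$ is componentwise. The only point to be careful about is the natural isomorphism $\N^{k+\ell}\cong \N^k\times \N^\ell$, which is what makes the coordinatewise decomposition of $(a_1,a_2)+(b_1,b_2)$ legitimate and allows the factorization properties of $\Lambda_1$ and $\Lambda_2$ to be invoked independently.
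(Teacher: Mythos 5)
Your proof is correct; the paper states this result as a citation to Kumjian--Pask without giving a proof, and your coordinatewise verification of the factorization property (existence and uniqueness both decoupling along the identification $\N^{k+\ell}\cong\N^k\times\N^\ell$) is exactly the standard argument given in the cited source.
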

        
        \begin{dfn}
            \label{kghom}
            A functor $\phi: (\Lambda_1, d_1) \to (\Lambda_2, d_2)$ is a \textit{higher-rank graph morphism} if given composable pair $\lambda_1,\lambda_2\in \Lambda_1$ we have $\phi(\lambda_2\lambda_1)=\phi(\lambda_2)\phi(\lambda_1)$ and $d_2(\phi(\lambda_1))=d_1(\lambda_1)$. If $\phi$ is bijective, then $\phi$ is called an isomorphism and we write $\Lambda_1\cong \Lambda_2$.
        \end{dfn}
        
        \begin{rem}
            Define the natural projection $\pi_1: \Lambda_1\times\Lambda_2 \to \Lambda_1$. Note that $\pi_1$ is a higher-rank graph morphism by construction. 
        \end{rem}
        
        For a set $A\subseteq \N^k$ we will define $\Lambda^A = \{ \lambda \in \Lambda : d(\lambda)\in A\}$. When using singleton sets like $\{e_1\}$ we will suppress the $\{\cdot\}$. That is, $\Lambda^{e_1} := \Lambda^{\{e_1\}}$. We will regard $\Lambda^0$ as the set of \textit{vertices} of $\Lambda$. In particular, the factorization property guarantees that for any $\lambda\in \Lambda$ there exist  unique $v,w\in \Lambda^0$ such that $w\lambda v=\lambda$. We use the convention $s(\lambda)=v$ and $r(\lambda)=w$. This will allow us to define two more sets:
        \begin{center}
            \begin{tabular}{ccc}
                $w\Lambda = \{ \lambda\in \Lambda : r(\lambda)=w \}$ & and & $\Lambda v = \{ \lambda\in \Lambda : s(\lambda)=v \}$  \\
            \end{tabular}.
        \end{center}
        We define $w\Lambda^A$ and $\Lambda^A v$ analogously.
        
        \begin{dfn}
            Adopting the convention, $E:=\{e_1,\dots,e_k\}$ the standard basis of $\N^k$. We say that a $k$-graph, $\Lambda$, is \textit{row-finite} if for all $w\in \Lambda^0$ we have $|w\Lambda^E|<\infty$. Additionally we say that $\Lambda$ is \textit{source free} if $|w\Lambda^E|\neq 0$ for all $w\in \Lambda^0$.
        \end{dfn}
        
        \begin{dfn}
            For a $k$-graph, $\Lambda$, each morphism in $\Lambda^E$ has a $s$ or $r$ in $\Lambda^0$. We define the \textit{$1$-skeleton} as the directed graph $G(\Lambda)=(\Lambda^0,\Lambda^E,s,r)$. This can be thought of as a colored digraph with each edge in $f\in\Lambda^E$ having the associated color $d(f)\in E$ (the standard basis of $\N^k$).
        \end{dfn}

        Let $\mathbb{F}^+_k$ be the free semigroup on generators, $E$. Using the notation introduced in \cite{oneskel}, we extend $d|_{G(\Lambda)}$ to the functor $c:G(\Lambda)^*\to\mathbb{F}_k^+$. This functor is called the color order. There is also a canonical quotient map $\Pi: G(\Lambda)^* \to \Lambda$. We then define an equivalence relation on $G(\Lambda)^*$, $\sim_\Lambda$, such that $\lambda_1\sim_\Lambda \lambda_2$ if and only if $\Pi(\lambda_1)=\Pi(\lambda_2)$. 

        \begin{prop}
            \label{permprop}
            \cite[Proposition 4.7]{oneskel} Let $\Lambda$ be a $k$-graph with one skeleton $G(\Lambda)$. If $\lambda,\eta \in G(\Lambda)^*$ such that $\lambda\sim_\Lambda\eta$ and $c(\lambda)=c(\eta)$ then $\lambda=\eta$. Further, there is a canonical action of $S_{\|d(\lambda)\|_1}$ on $c(\lambda)$ and for every $\sigma\in S_{\|d(\lambda)\|_1}$ there exists a unique $\eta\in G(\Lambda)^*$ such that $\lambda\sim_\Lambda \eta$ and $c(\eta)=\sigma\cdot c(\lambda)$.
        \end{prop}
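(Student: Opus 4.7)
The plan is to establish both claims by induction on $n := \|d(\lambda)\|_1$, with the factorization property of $\Lambda$ doing the key work at each step. Throughout I write $\lambda = f_1 \cdots f_n$ and $\eta = g_1 \cdots g_n$ with $f_i, g_i \in \Lambda^E$ regarded as edges of $G(\Lambda)$, so that $c(\lambda) = d(f_1) \cdots d(f_n) \in \mathbb{F}_k^+$.

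For the first claim, the hypothesis $c(\lambda) = c(\eta)$ forces $d(f_i) = d(g_i)$ for every $i$, while $\lambda \sim_\Lambda \eta$ says $\Pi(\lambda) = \Pi(\eta)$ as morphisms in $\Lambda$. Splitting this common morphism at degree $d(f_1) = d(g_1)$, the factorization property of $\Lambda$ yields unique factors of degrees $d(f_1)$ and $d(f_2) + \cdots + d(f_n)$; both $f_1 \cdot \Pi(f_2 \cdots f_n)$ and $g_1 \cdot \Pi(g_2 \cdots g_n)$ realize this factorization, so $f_1 = g_1$ and $\Pi(f_2 \cdots f_n) = \Pi(g_2 \cdots g_n)$. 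The inductive hypothesis then closes the argument.

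For the second claim, the canonical action of $S_n$ on color words is generated by adjacent transpositions $\tau_i$, so I would first establish existence for a single $\tau_i$. Given $\lambda$ with $d(f_i) = e_p$ and $d(f_{i+1}) = e_q$, apply the factorization property to $\Pi(f_i f_{i+1}) \in \Lambda^{e_p + e_q}$ with the order of degrees reversed, producing unique $g_i, g_{i+1} \in \Lambda^E$ such that $d(g_i) = e_q$, $d(g_{i+1}) = e_p$, and $g_i g_{i+1} = f_i f_{i+1}$ in $\Lambda$. Setting $\eta := f_1 \cdots f_{i-1} g_i g_{i+1} f_{i+2} \cdots f_n$ gives $\lambda \sim_\Lambda \eta$ and $c(\eta) = \tau_i \cdot c(\lambda)$; uniqueness of this $\eta$ follows from the first claim. (When $e_p = e_q$ this just forces $\eta = \lambda$, matching $\tau_i \cdot c(\lambda) = c(\lambda)$.)

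To extend existence to an arbitrary $\sigma \in S_n$, factor $\sigma$ into adjacent transpositions and iterate the construction. The main subtlety, which I expect to be the primary obstacle, is verifying that the resulting $\eta$ depends only on $\sigma$ and not on the particular chosen factorization, equivalently that the braid relations are respected at the level of paths rather than merely at the level of color words. However, the first claim supplies exactly this coherence for free: two different expressions of $\sigma$ yield paths $\eta, \eta'$ with $\eta \sim_\Lambda \lambda \sim_\Lambda \eta'$ and $c(\eta) = \sigma \cdot c(\lambda) = c(\eta')$, so $\eta = \eta'$. Thus the canonical $S_n$-action on color words lifts uniquely to $\sim_\Lambda$-classes, completing the proof.
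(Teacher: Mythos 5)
Your argument is correct. Note that the paper itself gives no proof of this proposition; it is quoted verbatim from the cited reference, so there is no in-paper argument to compare against. Your proof is the standard one for this result: induction on $\|d(\lambda)\|_1$ using uniqueness in the factorization property (together with the fact that $\Pi$ is injective on edges of the $1$-skeleton) for the first claim, and generation of $S_n$ by adjacent transpositions---with the first claim supplying the coherence needed to see that the lifted path depends only on $\sigma$ and not on the chosen factorization into transpositions---for the second.
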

        
        The main result of \cite{oneskel} is the following
        \begin{prop}
        
\label{kg-rel-prop}
    \cite[Theorem 4.4]{oneskel}
    Let $(G,d)$ be a $k$-colored digraph. Let $\sim$ be an equivalence relation on $G^*$ satisfying:
    \begin{enumerate}[start=0,label={(KG\arabic*):}]
        \item If $\lambda\in G^*$ is a path such that $\lambda=\lambda_2\lambda_1$, then $[\lambda] = [p_2p_1]$ whenever $p_1\in [\lambda_1]$ and $p_2\in [\lambda_2]$.
        \item If $\lambda\in G^1$ then $\lambda\sim_\Lambda \mu$ implies $\lambda=\mu$.
        \item If $\lambda,\mu,\in G^1$ such that $s(\lambda)=r(\mu)$ and $d(\lambda)\neq d(\mu)$, then there exist a unique pair $\lambda',\mu'\in \Lambda^E$ such that $s(\lambda')=r(\mu')$, $d(\lambda)=d(\mu')$, $d(\mu)=d(\lambda')$, and $\lambda'\mu'\sim_\Lambda \lambda\mu$.
        \item Given a triplet $\lambda,\mu,\eta \in G^1$ such that $d(\lambda)\neq d(\mu) \neq d(\eta)$ the pairwise application of $\sim_\Lambda$ satisfies associativity. 
    \end{enumerate}
    The pair $(G^*/\sim,d)$ is a well-defined $k$-graph.
\end{prop}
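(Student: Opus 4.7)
My plan is to verify the three defining properties of a $k$-graph for the pair $(G^*/\sim, d)$: that it is a countable category, that $d$ descends to a well-defined functor into $\N^k$, and that the factorization property holds.

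First, the free path category $G^*$ is countable with concatenation of paths as composition. Axiom (KG0) asserts exactly that $\sim$ is a congruence with respect to this composition, so $G^*/\sim$ inherits the structure of a countable category and the quotient map $\Pi \co G^* \to G^*/\sim$ is a functor. Second, to show that $d$ descends I need $\lambda \sim \mu$ to imply $d(\lambda) = d(\mu)$. I would argue this by induction on path length: the base case (length $\le 1$) is immediate from (KG1), and for longer paths I would decompose a representative as a concatenation of edges, repeatedly invoke (KG2) to analyze how the leading edge can be altered within an equivalence class (each such alteration being a color swap that preserves total degree), and apply the inductive hypothesis to the tail.

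The main content is verifying the factorization property. Given $[\lambda] \in G^*/\sim$ with $d(\lambda) = m+n$, I must produce unique $[\mu], [\nu]$ with $d(\mu) = m$, $d(\nu) = n$, and $[\mu\nu] = [\lambda]$. For existence, take a representative $\lambda = f_1 f_2 \cdots f_p$ as a concatenation of edges, so each $d(f_i)$ is a standard basis vector and the multiset of colors has totals $m+n$. Using (KG2) as a bubble-sort step, swap adjacent differently-colored edges to rearrange the sequence so that for each color $i$, exactly $m_i$ edges of color $i$ appear in the first $\|m\|_1$ positions; the result is a factorization $\lambda \sim \mu \nu$ with the required degrees. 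For uniqueness, induct on $\|m+n\|_1$: given two such factorizations with $[\mu_1\nu_1] = [\mu_2\nu_2]$, compare the first edges of $\mu_1$ and $\mu_2$, use (KG2) to align them when they differ, and apply the inductive hypothesis to the shorter remaining paths.

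I expect uniqueness to be the main obstacle. Existence reduces to a sorting argument that uses only pairwise commutation of differently-colored edges, but uniqueness requires checking that the various (KG2) swaps used to reconcile two factorizations compose coherently. This is precisely where (KG3) enters: it supplies the associativity needed so that the order in which multiple adjacent swaps are applied does not affect the resulting equivalence class, preventing the induction from stalling when more than two edges must be commuted simultaneously. The careful bookkeeping of intermediate paths inside $G^*$ (as opposed to inside the quotient) and tracking which equivalence class they lie in is where the proof will demand the most care.
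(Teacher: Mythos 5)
This proposition is not proved in the paper at all: it is imported verbatim as \cite[Theorem 4.4]{oneskel}, so there is no internal argument to compare yours against. Measured against the cited source, your outline follows the standard strategy and correctly assigns a job to each axiom: (KG0) makes $\sim$ a congruence so the quotient is a category, (KG1) pins down edges, (KG2) supplies the adjacent-transposition moves that give existence of factorizations by a sorting argument, and (KG3) is the coherence (hexagon/associativity) condition that makes the uniqueness induction close up when three or more pairwise swaps must be reconciled. Identifying (KG3) as the crux of uniqueness rather than existence is exactly right and is the part most people miss.

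Two cautions. First, your argument that $d$ descends to the quotient tacitly assumes that $\sim$ is generated by the (KG2) color swaps, so that every elementary equivalence preserves the color multiset. As the hypotheses are transcribed here, $\sim$ is only required to \emph{satisfy} (KG0)--(KG3), not to be the relation \emph{generated} by the squares, and nothing stated forbids an equivalence between paths of different degree. This is really a looseness in the statement (the source works with the relation generated by a complete, associative collection of squares), but your proof should either invoke that generation hypothesis explicitly or add the assumption that $\lambda\sim\mu$ implies $c(\lambda)$ and $c(\mu)$ are permutations of one another. Second, the uniqueness step as written ("align the first edges and induct") is a plan rather than a proof: the actual content is showing that any two sequences of adjacent swaps carrying a path to a fixed target color word yield the same path, which is a word-problem argument in the symmetric group reduced to braid and far-commutation relations --- the former being (KG3) and the latter following from the uniqueness clause in (KG2). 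You have located where the work lives, but that coherence lemma still needs to be stated and proved for the induction to go through.
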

    
        The upshot is that given a colored digraph $G$ with $k$ colors of edges, and an equivalence relation on $G^*$ that satisfies $(KG0-3)$, then we may construct a graph of rank $k$ out of those objects. Further, each graph of rank $k$ will produce a unique pair $G(\Lambda)^*$ and $\sim_\Lambda$ such that $G(\Lambda)^*/\sim_\Lambda\cong \Lambda$ \cite[Theorem 4.5]{oneskel}. So, we may move between these two schematics of a $k$-graph with little to no effort.
        \begin{rem}
        \label{prod-rem}
            Careful analysis of the product graph definition will reveal that $G(\Gamma\times\Lambda) = G(\Gamma)\square G(\Lambda)$, where $\square$ represents the Cartesian product of edge colored directed graphs. The factorization rule $\sim_\times$ is given by $(\gamma_1,\lambda_1)\sim_\times (\gamma_2,\lambda_2)$ if and only if $\gamma_1\sim_\Gamma \gamma_2 $ and $\lambda_1\sim_\Lambda \lambda_2$.
        \end{rem} 
        
        \begin{rem}
        \label{homrem}
            Note that a functor between path categories $\phi:G(\Lambda_1)^*\to G(\Lambda_2)^*$ such that $\mu\sim_1\nu$ implies $\phi(\mu)\sim_2\phi(\nu)$ will induce a higher-rank graph morphism $\hat{\phi}:\Lambda_1\to \Lambda_2$ such that $\hat{\phi}[\mu]_1=[\phi(\mu)]_2$. 
        \end{rem}
        
        Since the move described in this paper is meant to be interpreted geometrically, we will be using the $1$-skeleton picture almost exclusively. For this reason, we will take a moment to reintroduce the definition of a $k$-graph $C^*$-algebra in terms of the $1$-skeleton.
        
        \begin{dfn}
            \cite[Definition 1.5]{kp} Let $\Lambda$ be a row-finite, source free $k$-graph. A \textit{Cuntz-Krieger} (CK) $\Lambda$-\textit{family}, $\{T_\lambda: \lambda \in \Lambda^E\cup\Lambda^0\}$, is a set of partial isometries such that:
            \begin{enumerate}[start=0,label={(CK\arabic*):}]
                \item $\{ T_x:x\in \Lambda^0 \}$ is a set of mutually orthogonal projections.
                \item If $\lambda \mu\sim \eta\gamma$, then $T_{\lambda}T_{\mu} = T_\eta T_\gamma$.
                \item For any $\lambda\in \Lambda^E$, we have that $T^*_\lambda T_\lambda = T_{s(\lambda)}$.
                \item For any $x\in \Lambda^0$ and $e_i\in E$, we have that $\displaystyle{\sum_{\lambda\in x\Lambda^{e_i}}} T_\lambda T^*_\lambda = T_x$.
            \end{enumerate}
        \end{dfn}
        
        $C^*(\Lambda)$ is defined as the universal $C^*$-algebra generated by a Cuntz-Krieger $\Lambda$-family, that is $C^*(\Lambda)$ is generated by a Cuntz-Krieger $\Lambda$-family $\{s_\lambda:\lambda \in \Lambda^E\cup\Lambda^0\}$ with the property that for any CK $\Lambda$-family $\{T_\lambda: \lambda \in \Lambda^E\cup\Lambda^0\}$ there exists an onto $*$-homomorphism $\pi_{T}: C^*(\Lambda) \to C^*(T)$ such that $\pi_{T}(s_\lambda)=T_\lambda$.
        
        The final piece of the puzzle is that our work is concerned with preserving Morita equivalence between $C^*(\Lambda)$ and $C^*(\Lambda_R)$. To that end, we need to introduce some final results related to the general structure of $C^*(\Lambda)$. Our first result relates to the canonical gauge action $\alpha: \T^k \to \Aut(C^*(\Lambda))$ defined by its action on the generators, $\{s_\lambda:\lambda \in \Lambda^E\cup\Lambda^0\}$:
        \begin{align*}
            \alpha_z(s_\lambda) &= z^{d(\lambda)} s_\lambda 
        \end{align*}
        \noindent where, for $z=(z_1,\dots,z_k)\in \T^k$, we define $z^{(n_1,n_2,\dots,n_k)} := z_1^{n_1}z_2^{n_2}\cdots z_k^{n_k}$.
        
        \begin{thm}
            \label{gauge}
            \cite[Theorem 3.4]{kp} \textbf{(Guage-Invariant Uniqueness Theorem)} Fix a row-finite, source free $k$-graph, $\Lambda$, along with a $*$-homomorphism $\pi: C^*(\Lambda) \to B$.  If  $\pi(p_v)\neq 0$ for all $v$, and there exists an action $\beta: \T^k \to \Aut(B)$ such that 
            \[ \beta \pi = \pi \alpha, \]
            then $\pi$ is injective.
        \end{thm}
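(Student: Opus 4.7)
My plan is to follow the standard conditional expectation and core-injectivity strategy that Kumjian and Pask use for CK uniqueness theorems. First, I would construct the canonical faithful conditional expectation $\Phi \co C^*(\Lambda) \to C^*(\Lambda)^\alpha$ onto the fixed-point algebra (the \emph{core}) of the gauge action by averaging, $\Phi(a) = \int_{\T^k} \alpha_z(a) \, dz$, and the analogous $\Phi' \co B \to B^\beta$. The intertwining hypothesis $\beta \pi = \pi \alpha$ immediately gives $\Phi' \circ \pi = \pi \circ \Phi$, so $\pi$ restricts to a $*$-homomorphism between the fixed-point algebras.

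The main work is to show that $\pi$ is injective on the core $C^*(\Lambda)^\alpha$. I would analyze the core as the closure of the increasing union of finite-dimensional $*$-subalgebras $\mathcal{F}_n = \overline{\Span}\{s_\mu s_\nu^* : d(\mu) = d(\nu) = n,\ s(\mu) = s(\nu)\}$ indexed by $n \in \N^k$ (using row-finiteness to guarantee local finite-dimensionality). For fixed $n$, the relations (CK0)--(CK3) show the nonzero elements $\{s_\mu s_\nu^*\}$ behave as matrix units indexed by pairs with common source, so $\mathcal{F}_n$ is a finite direct sum $\bigoplus_v M_{|\Lambda^n v|}(\C)$, where the block labeled by $v \in \Lambda^0$ is cut out by the projection $\sum_{\mu \in \Lambda^n v} s_\mu s_\mu^* = p_v$. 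The hypothesis $\pi(p_v) \neq 0$ for every $v$ means $\pi$ is nonzero on each simple summand of every $\mathcal{F}_n$, hence injective on $\mathcal{F}_n$ by simplicity of matrix algebras. Passing to the direct limit gives injectivity on all of $C^*(\Lambda)^\alpha$.

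Once core injectivity is established, I would upgrade to the whole algebra by the usual trick. Suppose $\pi(a) = 0$; then $\pi(a^*a) = 0$, so $\Phi'(\pi(a^*a)) = \pi(\Phi(a^*a)) = 0$. Since $\Phi(a^*a)$ lies in the core and $\pi$ is injective there, $\Phi(a^*a) = 0$. Because $\Phi$ is a faithful conditional expectation on positive elements, $a^*a = 0$, whence $a = 0$.

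The main obstacle is the core-injectivity step: one needs to verify carefully that (CK1)--(CK3) really do make $\{s_\mu s_\nu^*\}_{d(\mu)=d(\nu)=n,\, s(\mu)=s(\nu)}$ into a system of matrix units, and that the finite-dimensional pieces $\mathcal{F}_n$ embed compatibly into $\mathcal{F}_{n'}$ for $n \leq n'$ via the relation (CK3) (which rewrites $s_\mu s_\nu^*$ as $\sum_{\lambda} s_{\mu\lambda} s_{\nu\lambda}^*$ where $\lambda$ ranges over paths of degree $n' - n$ out of the common source). This bookkeeping, rather than the averaging argument, is where the factorization property and row-finiteness must be used in an essential way.
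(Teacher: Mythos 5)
The paper does not prove this theorem; it is quoted verbatim from Kumjian--Pask \cite[Theorem 3.4]{kp}, so there is no internal proof to compare against. Your sketch is essentially the standard argument from that source: average over the gauge action to get a faithful conditional expectation onto the core, intertwine it with $\pi$, show the core is an increasing union of direct sums of simple blocks on which $\pi$ is nonzero, and then pull injectivity back from the core. This is the right strategy and the outline is sound.

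One small correction: the unit of the block of $\mathcal{F}_n$ indexed by the common source $v$ is $\sum_{\mu \in \Lambda^n v} s_\mu s_\mu^*$, and this is \emph{not} equal to $p_v$ in general --- the relation (CK3) sums over paths with \emph{range} $v$ (giving $p_v = \sum_{\lambda \in v\Lambda^n} s_\lambda s_\lambda^*$ after iteration), not over paths with source $v$. The block unit is instead a subprojection of $\sum_x p_x$ taken over the ranges $x = r(\mu)$. This does not damage the argument: what you actually need is that $\pi$ is nonzero on each simple summand, and that follows from $\pi(s_\mu s_\mu^*) \neq 0$ for a single $\mu \in \Lambda^n v$, which in turn follows from $\pi(p_v) = \pi(s_\mu^* s_\mu) \neq 0$. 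With that one equation repaired, and with the matrix-unit verification $s_\nu^* s_\sigma = \delta_{\nu,\sigma} p_{s(\nu)}$ for distinct paths of equal degree carried out as you indicate (this is where row-finiteness and the factorization property enter), your proof goes through.
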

        
        Using Theorem \ref{gauge}, we obtain many notable isomorphisms of $k$-graph $C^*$-algebras. The one that is most useful for our purposes is the following.
        
        \begin{cor}
            \label{tensor}
            \cite[Corollary 3.5]{kp} For row finite source free higher-rank graphs, $\Lambda_1$ and $\Lambda_2$,
            \[ C^*(\Lambda_1 \times \Lambda_2) \cong C^*(\Lambda_1) \otimes C^*(\Lambda_2). \]
        \end{cor}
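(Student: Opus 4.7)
The plan is to construct a surjective $*$-homomorphism $\pi\co C^*(\Lambda_1\times\Lambda_2)\to C^*(\Lambda_1)\otimes C^*(\Lambda_2)$ via the universal property, and then apply the Gauge-Invariant Uniqueness Theorem (Theorem \ref{gauge}) to upgrade it to an isomorphism.

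First I would build a Cuntz-Krieger $(\Lambda_1\times\Lambda_2)$-family inside $C^*(\Lambda_1)\otimes C^*(\Lambda_2)$. Writing $\{s^1_\lambda\}$ and $\{s^2_\mu\}$ for the canonical generators of $C^*(\Lambda_1)$ and $C^*(\Lambda_2)$, the standard basis of $\N^{k+\ell}$ forces every element of $(\Lambda_1\times\Lambda_2)^E$ to be of the form $(\lambda,v)$ with $\lambda\in\Lambda_1^{e_i}$ and $v\in\Lambda_2^0$, or $(w,\mu)$ with $w\in\Lambda_1^0$ and $\mu\in\Lambda_2^{e_j}$. I would set $T_{(\lambda,v)} := s^1_\lambda\otimes s^2_v$, $T_{(w,\mu)}:=s^1_w\otimes s^2_\mu$, and $T_{(x,y)}:=s^1_x\otimes s^2_y$ on vertices. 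Checking (CK0) and (CK2) is immediate from the tensor structure. For (CK1), the only nontrivial commutation $(\lambda,r(\mu))(s(\lambda),\mu)\sim_\times (r(\lambda),\mu)(\lambda,s(\mu))$ translates on both sides to $s^1_\lambda\otimes s^2_\mu$. For (CK3), the sum decomposes along one factor and applies the (CK3) relation in $C^*(\Lambda_i)$ while the other factor is a single projection. Universality then produces the desired $\pi$.

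Surjectivity is immediate, since $\pi$ hits every elementary tensor of generators $s^1_\lambda\otimes s^2_\mu$ (write $(\lambda,\mu)$ as a product of colored edges in $\Lambda_1\times\Lambda_2$ and apply $\pi$), and these span a dense $*$-subalgebra of $C^*(\Lambda_1)\otimes C^*(\Lambda_2)$. For injectivity I would verify the two hypotheses of Theorem \ref{gauge}. The vertex projections map to $s^1_x\otimes s^2_y$, which is nonzero because each factor is a nonzero projection in the respective $C^*$-algebra. For the gauge equivariance, let $\alpha^i$ denote the gauge action on $C^*(\Lambda_i)$ and define
\[
\beta_{(z_1,z_2)} := \alpha^1_{z_1}\otimes\alpha^2_{z_2}\in\Aut\bigl(C^*(\Lambda_1)\otimes C^*(\Lambda_2)\bigr).
\]
On a generator of the form $(\lambda,v)$ with $d_1(\lambda)=e_i$, one side gives $\beta_{(z_1,z_2)}(\pi(s_{(\lambda,v)}))=z_1^{e_i}\,s^1_\lambda\otimes s^2_v$, and the other side gives $\pi(\alpha_{(z_1,z_2)}(s_{(\lambda,v)})) = (z_1,z_2)^{(e_i,0)} s^1_\lambda\otimes s^2_v$, and these agree; the case $(w,\mu)$ is symmetric.

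The main subtlety I anticipate is tracking the indexing conventions (which $\T$-coordinate acts on which tensor factor) and confirming (CK1) on the mixed-color commutation that defines the product $k$-graph via Remark \ref{prod-rem}; once that bookkeeping is set up, the rest of the argument is a clean application of Theorem \ref{gauge}. Note also that no nuclearity hypothesis is needed here because we only need the existence of some $C^*$-tensor norm for which $\pi$ is defined; the Gauge-Invariant Uniqueness Theorem takes care of the rest.
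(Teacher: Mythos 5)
This corollary is quoted directly from \cite[Corollary 3.5]{kp} and the paper supplies no proof of its own; your argument --- realize $\{s^1_\lambda\otimes s^2_\mu\}$ as a Cuntz-Krieger $(\Lambda_1\times\Lambda_2)$-family, obtain a surjection from the universal property, and upgrade to an isomorphism via Theorem \ref{gauge} with $\beta_{(z_1,z_2)}=\alpha^1_{z_1}\otimes\alpha^2_{z_2}$ --- is precisely the proof given there, and every step you outline (including the mixed-color (CK1) check coming from Remark \ref{prod-rem}) goes through. The one quibble is your closing remark: without nuclearity the unadorned symbol $\otimes$ is ambiguous, so strictly speaking your argument establishes the isomorphism with $C^*(\Lambda_1)\otimes_{\min}C^*(\Lambda_2)$ (where $s^1_x\otimes s^2_y\neq 0$ and the extension of $\alpha^1_{z_1}\otimes\alpha^2_{z_2}$ to an automorphism are automatic); the norm-independence of the statement does rest on the nuclearity of higher-rank graph $C^*$-algebras.
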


        Corollary \ref{tensor} demonstrates that the monoidal category structure of higher-rank graphs relates directly to the monoidal category structure of $C^*$-algebras. 
        
        Using Theorem \ref{tensor} effectively relies on constructing an action $\beta$. In constructing such actions we will utilize the following technical lemma.
        
        \begin{lem}\cite[Lemma 2.9]{efgggp}
        	\label{betalem}
        	Let $(\Lambda,d)$ be a row-finite source-free $k$-graph. Given a functor $\D: \Lambda \to \Z^k$, the function $\beta: \T^k \to \Aut(C^*(\Lambda))$ which satisfies
        	\[\beta_z(t_\mu t_\nu^*) = z^{\D(\mu)-\D(\nu)}t_\mu t_\nu^*\]
        	for all $\mu,\nu \in \Lambda$ and $z\in \T^k$, is an action of $\T^k$ on $C^*(\Lambda)$.
        \end{lem}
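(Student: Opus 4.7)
The plan is to invoke the universal property of $C^*(\Lambda)$ to define each $\beta_z$ on generators, then assemble them into a strongly continuous group homomorphism, and finally extend the defining identity from generators to arbitrary $t_\mu t_\nu^*$.

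First, for each fixed $z\in\T^k$, I would set $T_\lambda := z^{\D(\lambda)} t_\lambda$ for every $\lambda\in \Lambda^E\cup\Lambda^0$ and verify that $\{T_\lambda\}$ is a Cuntz-Krieger $\Lambda$-family. For (CK0), since $\Z^k$ has a single object the functor $\D$ sends the identity morphism $x\in\Lambda^0$ to $0\in\Z^k$, so $T_x=t_x$ and these remain mutually orthogonal projections. For (CK1), whenever $\lambda\mu\sim\eta\gamma$, functoriality of $\D$ gives $\D(\lambda)+\D(\mu)=\D(\lambda\mu)=\D(\eta\gamma)=\D(\eta)+\D(\gamma)$, so the scalars match and $T_\lambda T_\mu = T_\eta T_\gamma$. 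Relations (CK2) and (CK3) use only that $z^n\overline{z}^n=1$ on $\T^k$, which collapses the scalar contribution so these reduce to the original Cuntz-Krieger relations for $\{t_\lambda\}$. The universal property then yields a $*$-homomorphism $\beta_z\co C^*(\Lambda)\to C^*(\Lambda)$ satisfying $\beta_z(t_\lambda)=z^{\D(\lambda)}t_\lambda$.

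Next I would check the group and inverse structure. On generators, $\beta_z\beta_w(t_\lambda)=\beta_z(w^{\D(\lambda)}t_\lambda)=(zw)^{\D(\lambda)}t_\lambda=\beta_{zw}(t_\lambda)$, and the identity $1\in\T^k$ gives $\beta_1=\id$ on generators; since all relevant maps are $*$-homomorphisms agreeing on generators, these identities extend to all of $C^*(\Lambda)$. Consequently $\beta_z\beta_{z^{-1}}=\id$, so each $\beta_z\in\Aut(C^*(\Lambda))$. For strong continuity, the map $z\mapsto z^{\D(\lambda)}t_\lambda$ is continuous for each generator, hence $z\mapsto\beta_z(a)$ is continuous for $a$ in the $*$-algebra generated by $\{t_\lambda\}$; since each $\beta_z$ is norm-decreasing (in fact isometric), a standard $\varepsilon/3$ approximation by elements of this dense $*$-subalgebra promotes continuity to all of $C^*(\Lambda)$.

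Finally, to recover the stated formula on $t_\mu t_\nu^*$ for arbitrary $\mu,\nu\in\Lambda$, factor $\mu=\mu_n\cdots\mu_1$ with each $\mu_i\in\Lambda^E$ using the factorization property (with the convention that $\mu\in\Lambda^0$ gives the empty factorization). Since $\beta_z$ is a $*$-homomorphism and $\D$ is a functor,
\[ \beta_z(t_\mu)=\prod_{i=n}^1 z^{\D(\mu_i)}t_{\mu_i}=z^{\sum_i\D(\mu_i)}t_\mu=z^{\D(\mu)}t_\mu, \]
and taking adjoints gives $\beta_z(t_\nu^*)=\overline{z}^{\D(\nu)}t_\nu^*=z^{-\D(\nu)}t_\nu^*$. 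Multiplying yields the claimed identity.

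The main (and essentially only) obstacle is the Cuntz-Krieger check in the first paragraph: one has to be careful that $\D$ being a functor to $\Z^k$ (viewed as a one-object category) forces $\D$ to vanish on $\Lambda^0$ and to be additive across the factorization equivalence $\sim$, since this is what makes (CK0) and (CK1) go through. Everything else is bookkeeping with scalars in $\T^k$ and a standard density argument for continuity.
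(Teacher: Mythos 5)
Your argument is correct. The paper does not actually prove this lemma---it imports it verbatim from \cite[Lemma 2.9]{efgggp}---but your proof (rescaling the generators to $T_\lambda = z^{\D(\lambda)}t_\lambda$, using that a functor to the one-object category $\Z^k$ kills vertices and is additive across the factorization relation to verify (CK0)--(CK3), invoking the universal property, and then extending to arbitrary $t_\mu t_\nu^*$ by factorization and a density argument for continuity) is precisely the standard argument behind that cited result, with all the delicate points (vanishing on $\Lambda^0$, additivity under $\sim$, invertibility via $\beta_{z^{-1}}$) handled correctly.
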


        Lemma \ref{betalem} will be the motivating tool behind the formal definition a higher-rank graph move. Until then, we will simply point out that this corollary defines gauge-like actions for functors to $\Z^k$.
        
        Finally, we need a way of understanding Morita equivalence of $k$-graph $C^*$-algebras. There is a well known result from \cite{allen} that we will restate here with added context to utilize later in our proofs. Specifically these results depend on the idea of a \textit{saturation} whose definition follows.
        
        \begin{dfn}
            Given a set $X\subseteq \Lambda^0$, we define its \textit{saturation}, $\Sigma(X)$, as the smallest set that contains $X$ and is:
            \begin{itemize}
                \item \textbf{Hereditary:} If $v\in \Sigma(X)$ and $\lambda\in v\Lambda$,
                then $s(\lambda)\in \Sigma(X)$.
                \item \textbf{Saturated:} If for some $n\in \N^k$ we have $s(v\Lambda^n)\subseteq \Sigma(X)$, then $v\in \Sigma(X)$.
            \end{itemize}
        \end{dfn}
        
        We now state a result that combines Remarks $3.2(2)$, Corollary $3.7$, and Proposition $4.2$ from \cite{allen}.
        
        \begin{thm}
        \label{morita}
            \cite[3.2(2), 3.7, 4.2]{allen} Let $\Lambda$ be a $k$-graph, $X\subseteq \Lambda^0$, and define $P_X = \displaystyle{\sum_{v\in X}} p_v$. If $\Sigma(X)=\Lambda^0$, then $P_XC^*(\Lambda)P_X \sim_{ME} C^*(\Lambda)$.
        \end{thm}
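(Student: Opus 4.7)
The plan is to establish the Morita equivalence via the standard full corner theorem: if $A$ is a $\sigma$-unital $C^*$-algebra and $P\in M(A)$ is a projection whose closed two-sided ideal in $A$ equals $A$, then $PAP\sim_{ME}A$. Here $A=C^*(\Lambda)$ is $\sigma$-unital because $\Lambda^0$ is countable and the mutually orthogonal vertex projections from (CK0) form a countable approximate unit; by the same orthogonality, the sum $P_X=\sum_{v\in X}p_v$ converges strictly and is a projection in $M(C^*(\Lambda))$. So the substantive task is to verify that $P_X$ is \emph{full}, i.e.\ the closed two-sided ideal $I$ of $C^*(\Lambda)$ generated by $P_X$ equals all of $C^*(\Lambda)$.

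To do this, I would define $Y=\{v\in\Lambda^0:p_v\in I\}$, which clearly contains $X$, and show that $Y$ is both hereditary and saturated in the sense of the definition preceding the theorem. The minimality of $\Sigma(X)$ then forces $\Lambda^0=\Sigma(X)\subseteq Y$, and since each generator of $C^*(\Lambda)$ satisfies $s_\lambda=p_{r(\lambda)}s_\lambda$, this will give $I=C^*(\Lambda)$. For hereditariness: if $v\in Y$ and $\lambda\in v\Lambda^E$ is an edge, then (CK2) yields $p_{s(\lambda)}=s_\lambda^*s_\lambda=s_\lambda^*p_vs_\lambda\in I$; for a morphism of higher degree, factor via the factorization property and iterate. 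For saturation: if $s(v\Lambda^n)\subseteq Y$, iterate (CK3) along a factorization into $\|n\|_1$ edges to obtain $p_v=\sum_{\lambda\in v\Lambda^n}s_\lambda s_\lambda^*=\sum_{\lambda\in v\Lambda^n}s_\lambda p_{s(\lambda)}s_\lambda^*\in I$, so $v\in Y$.

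The main obstacle, modest as it is, is the bookkeeping for the iterated (CK3) step: one needs to know that the sum-of-range-projections identity extends from the basis elements $e_i$ to arbitrary $n\in\N^k$, which requires commuting the edge factorizations via Proposition \ref{permprop} and checking that the resulting partition of $v\Lambda^n$ is exactly what iterated (CK3) produces. Once $P_X$ is established as a full projection, the Morita equivalence $P_XC^*(\Lambda)P_X\sim_{ME}C^*(\Lambda)$ follows either by citing Brown's theorem on full corners of $\sigma$-unital $C^*$-algebras or by exhibiting the imprimitivity bimodule $C^*(\Lambda)P_X$ directly, with right inner product $\langle a,b\rangle=P_Xa^*bP_X$ and left inner product $\langle a,b\rangle=ab^*$, whose density in $P_XC^*(\Lambda)P_X$ and $C^*(\Lambda)$ respectively encodes precisely the fullness of $P_X$.
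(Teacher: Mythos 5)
Your argument is correct, but note that the paper does not prove this statement at all: it is imported verbatim from \cite{allen} (Remark 3.2(2) supplies the projection $P_X$ in the multiplier algebra, Corollary 3.7 identifies the ideal generated by the corner with the one coming from the saturation, and Proposition 4.2 gives the Morita equivalence), and your full-corner argument — strict convergence of $P_X$, fullness via showing $\{v : p_v \in I\}$ is hereditary and saturated, then Brown's theorem — is essentially the proof given in that reference. The only implicit hypothesis to flag is that the iterated (CK3) step needs $\Lambda$ row-finite and source-free, which is the standing assumption of the paper.
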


        \section{Higher-Rank Graph Moves}
        \label{moves}

        Thanks to the work of \cite{compclass,efgggp,outsplit} geometric transformations of higher-rank graphs have become a focus of recent research. Since this line of inquiry is still young, we will take some time to formalize certain concepts from a category theoretic perspective. In particular, we will offer new vocabulary that helps to unify this new work with classical higher-rank graph results.
        
    \begin{dfn}
        \cite[Defintion 2.2]{quasimorph1} For an $\ell$-graph, $\Gamma$, a $k$-graph, $\Lambda$, and a monoid morphism $\omega: \N^\ell \to \N^k$, a functor $\psi: \Gamma \to \Lambda$ is an $\omega$-quasimorphism if $\omega d_\Gamma = d_\Lambda \psi$.
    \end{dfn}

    For a $k$-graph, $\Lambda$, and an automorphism of $\N^k$, $\alpha$, there exists an $\alpha$-quasi-isomorphism from $(\Lambda,d)\to (\Lambda, \alpha d)$. For this reason, much liberty can be taken with the ordering of the standard basis vectors.

   \begin{dfn}
       Suppose that $\Omega$ is a small category and $(\Gamma,d_\Gamma)$ is an $\ell$-graph. The pair of small category morphisms $\varphi: \Gamma \to \Omega$ and $\D_\varphi: \Omega \to \Z^\ell$ is called a \textit{$\Omega$-realization of $\Gamma$} if $\D_\varphi\varphi=d_\Gamma$. Often for succinctness we will say ``let $\varphi$ be a $\Lambda$-realization" with the existence of $\D_\varphi$ implied.
   \end{dfn}

   The word \textit{realization} is chosen here since $(\im(\varphi),\D)$ always forms an $\ell$-graph. Further still, $\Gamma\cong \im(\varphi)$ for injective $\varphi$. 

    \begin{prop}
    \label{inj/quasi-prop}
        If $\omega: \N^\ell\to \N^k$ is an injective monoid morphism, then an $\omega$-quasimorphism between higher-rank graphs $\psi: \Gamma \to \Lambda$ induces a $\Lambda$-realization of $\Gamma$.
    \end{prop}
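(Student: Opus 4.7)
The plan is to set $\varphi := \psi$, so that the first half of a $\Lambda$-realization is immediate (since $\psi$ is already a functor $\Gamma \to \Lambda$ by hypothesis), and then to construct $\D_\varphi : \Lambda \to \Z^\ell$ by ``inverting'' $\omega$ and composing with the ambient degree functor $d_\Lambda$.

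Concretely, first extend $\omega$ to a group homomorphism $\tilde\omega : \Z^\ell \to \Z^k$ via the Grothendieck completion; because $\omega$ is injective and $\N^\ell$ generates $\Z^\ell$ as an abelian group, the extension $\tilde\omega$ is also injective. Next, produce a group retraction $\rho : \Z^k \to \Z^\ell$ with $\rho\tilde\omega = \mathrm{id}_{\Z^\ell}$ by choosing a basis of $\Z^k$ that extends a basis of (the pure closure of) $\tilde\omega(\Z^\ell)$ and projecting onto the first $\ell$ coordinates. Then define
\[ \D_\varphi := \rho \circ d_\Lambda, \]
which is a functor from $\Lambda$ into the one-object category $\Z^\ell$ as a composition of the functor $d_\Lambda$ with the group homomorphism $\rho$. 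The defining identity of a realization is then a one-line check using the $\omega$-quasimorphism hypothesis $d_\Lambda \psi = \omega d_\Gamma$:
\[ \D_\varphi \varphi = \rho \circ d_\Lambda \circ \psi = \rho \circ \tilde\omega \circ d_\Gamma = d_\Gamma. \]

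The main obstacle is the middle step: bare injectivity of $\omega$ only produces a $\Q$-linear left inverse of $\tilde\omega$, while a $\Z$-linear $\rho$ exists exactly when $\tilde\omega(\Z^\ell)$ is a pure subgroup of $\Z^k$ (equivalently a direct summand). In the intended applications---where $\omega$ is a coordinate inclusion or an automorphism of $\N^k$, as with the $\alpha$-quasi-isomorphisms highlighted just before the proposition---this purity is automatic and $\rho$ can be read off from the coordinates. In full generality one either invokes the Smith normal form argument above to pass to an appropriate basis or supplements ``injective'' with a purity/splitting hypothesis; once $\rho$ is in hand, the functoriality of $\D_\varphi$ and the compatibility with $d_\Gamma$ are formal.
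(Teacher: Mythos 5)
Your argument is the same in spirit as the paper's, which is a two-line affair: take $\varphi=\psi$, invoke a left inverse $\pi$ of $\omega$ with $\pi\omega=\id_{\N^\ell}$, set $\D_\varphi=\pi d_\Lambda$, and check $(\pi d_\Lambda)\psi=\pi\omega d_\Gamma=d_\Gamma$. The difference is that the paper simply asserts the existence of the left inverse from injectivity, whereas you correctly observe that this step is not automatic: an injective monoid morphism $\N^\ell\to\N^k$ need not admit a $\Z$-valued (let alone $\N$-valued) left inverse. The map $n\mapsto 2n$ on $\N$ is already a counterexample, since any additive $\pi$ with $\pi(2)=1$ would force $\pi(1)=1/2$. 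So the obstruction you isolate — that a $\Z$-linear retraction of $\tilde\omega$ exists exactly when $\tilde\omega(\Z^\ell)$ is a direct summand (equivalently, pure) in $\Z^k$ — is a genuine gap in the proposition as stated, not a defect of your write-up; your Smith-normal-form/purity discussion is the honest version of the paper's "there exists a left inverse $\pi$". As you note, in every use the paper actually makes of this proposition ($\id$-quasimorphisms, automorphisms of $\N^k$, and the coordinate inclusion $\N^k\into\N^{k+1}$ arising from $k$-morphs and the linking graph), the image is a coordinate summand and the retraction is immediate, so the downstream results are unaffected; but the hypothesis "injective" in the statement should really be strengthened to "split injective" (or the image required to be pure) for the proof to go through in general.
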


    \begin{proof}
        Since $\omega$ is injective, there exists a left inverse $\pi$ such that $\pi\omega=\id_{\N^\ell}$. In particular, the tuple $(\psi,\pi d_\Lambda)$ is a $\Lambda$-realization of $\Gamma$. That is, $(\pi d_{\Lambda})\psi = \pi \omega d_\Gamma = d_\Gamma$.
    \end{proof}

    Proposition \ref{inj/quasi-prop} ensures that higher-rank graph morphisms (which are $\id$-quasimorphisms \cite[Definition 1.1]{kp}) likewise induce higher-rank graph realizations.

    \begin{dfn}
        A higher-rank graph move \textbf{(M)} is a mapping from pairs $(\Lambda,w)$, with $\Lambda$ a higher rank graph and $w\in \Lambda^0$, satisfying a set of hypotheses, $H_M$, to the category of higher-rank graphs. The image of $(\Lambda,w)$ is referred to as $\Lambda_M$. Further there exists either a $\Lambda$-realization of $\Lambda_M$ or a $\Lambda_M$-realization of $\Lambda$. 
    \end{dfn}

    As put forth in the introduction, this definition is meant to generalize the notion of graph move from \cite{compclass}. This next proposition is meant to further emphasize this sentiment.

    \begin{prop}
        Directed graph reduction \textbf{(DR)} \cite[Proposition 3.2]{move-eq} defined on a digraph, $G$, with regular vertex $w$ such that $|r^{-1}(w)|=|r(s^{-1}(w))|=1$ produces the graph $G_{DR}$ 
        \begin{align*}
            G_{DR}^0 & = G^0\setminus\{w\} & G_{DR}^1 &= G^1\setminus(s^{-1}(w)\cup r^{-1}(w)) \cup \{[ef]: s(e)=w\} 
        \end{align*}
        with $s_{DR}$ and $r_{DR}$ induced by $s,r$ on $G^1\setminus(r^{-1}(w)\cup s^{-1}(w))$, $s_{DR}(ef)=s(f)$, and $r_{D}(ef)=r(e)$. \textbf{(DR)} is a graph move from $(G^*,w)$ to $G_R^*$ with $G^*$-realization of $G_R^*$.
    \end{prop}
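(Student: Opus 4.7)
The plan is to exhibit a $G^*$-realization of $G_R^*$ explicitly, by constructing a functor $\varphi\co G_R^* \to G^*$ together with a length-like functor $\D_\varphi\co G^* \to \Z$ satisfying $\D_\varphi \circ \varphi = d_{G_R^*}$. The key observation is that $G_R^*$ and $G^*$ are both free path categories on their edge sets, so each of $\varphi$ and $\D_\varphi$ will be determined by its values on generators.

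First, define $\varphi$ on generators of $G_R^*$ as follows: send each vertex $v\in G_R^0 = G^0 \setminus \{w\}$ to itself in $G^0\subset G^*$; send each edge $e\in G^1\setminus(s^{-1}(w)\cup r^{-1}(w))$ to itself in $G^*$; and send each new edge $[ef]\in G_R^1$ (where $s(e)=w$ and $f$ is the unique element of $r^{-1}(w)$) to the length-two path $ef \in G^*$. To see this extends to a functor from $G_R^*$, I check that $\varphi$ is compatible with source and range: for an ordinary edge $e$, $s_{G^*}(\varphi(e))=s(e)=s_{DR}(e)$ and similarly for the range; for a new edge, $s_{G^*}(ef)=s(f)=s_{DR}([ef])$ and $r_{G^*}(ef)=r(e)=r_{DR}([ef])$. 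Because $G_R^*$ is freely generated by $G_R^1$ as a category over $G_R^0$, these values determine a unique functor.

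Next, define $\D_\varphi$ on $G^1$ by declaring $\D_\varphi(f)=0$ for the unique $f$ with $r(f)=w$ and $\D_\varphi(e)=1$ for every other edge of $G$; extend additively to all of $G^*$. This too is well-defined because $G^*$ is the free path category on $G^1$, so any function $G^1\to\Z$ extends uniquely to a functor $G^*\to\Z$.

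Finally, verify $\D_\varphi\circ\varphi = d_{G_R^*}$ on the generators of $G_R^*$: for a non-reduced edge $e$, $\D_\varphi(\varphi(e))=\D_\varphi(e)=1=d_{G_R^*}(e)$; for a new edge, $\D_\varphi(\varphi([ef]))=\D_\varphi(e)+\D_\varphi(f)=1+0=1=d_{G_R^*}([ef])$. Since both maps are functors agreeing on generators of the free category $G_R^*$, they coincide everywhere, giving the required $G^*$-realization. The only conceptually delicate point — and the main place where the hypotheses $|r^{-1}(w)|=|r(s^{-1}(w))|=1$ are used — is in ensuring that the new edges $[ef]$ are unambiguously defined (the choice of $f$ is forced) and that the degree-counting functor $\D_\varphi$ correctly assigns length one to each reduced edge, which is exactly what the assignment $\D_\varphi(f)=0$ on the singleton $r^{-1}(w)$ accomplishes.
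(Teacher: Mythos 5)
Your proposal is correct and follows essentially the same route as the paper: both exhibit the realization by sending ordinary edges to themselves and each new edge $[ef]$ to the length-two path $ef$, and both define the degree functor $\D$ on $G^1$ by assigning $0$ to the unique edge into $w$ and $1$ to all other edges, extending multiplicatively. Your additional remarks on freeness of the path categories and the source/range compatibility checks only make explicit what the paper leaves implicit.
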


    \begin{proof}
        Firstly, the set of hypotheses $H_R$ is the requirement of rank $1$, that $w$ is a regular vertex, and $|r^{-1}(w)|=|r(s^{-1}(w))|=1$. These hypotheses are on the tuple $(G^*,w)$. Next, $G_{DR}$ is constructed explicitly so we may form the path category $G_{DR}^*$. 

        What remains is to find a $G^*$-realization of $G_{DR}^*$. This realization is defined on edges of $G_{DR}^*$ and extended naturally to paths. On edges $e\in G^1\setminus(s^{-1}(w)\cup r^{-1}(w))$, we use natural inclusion $\varphi(e)=e$. For the added edges, $[ef]$, we define $\varphi([ef])=ef$. Since we have defined $\varphi: G_{DR}^1 \to G^*$, this extends to a functor $\varphi:G_{DR}^*\to G^*$.

        Finally, we need the connecting functor $\D:G^*\to \Z$. Again, we define $\D$ on $G^1$ and extend multiplicatively. $\D(e)=1$ for $r(e)\neq w$ and $\D(e)=0$ otherwise. This ensures that $\varphi \D = d_{DR}$ as desired.
    \end{proof}

    The moves from \cite{compclass} fall into two families. In this paper, we introduce vocabulary to distinguish these families.

    \begin{dfn}
        A higher-rank graph move, \textbf{(A)}, is called an \textit{adjustment} if for all pairs $(\Lambda,w)$ satisfying $H_A$ there exists a $\Lambda$-realization of $\Lambda_A$. We call this realization $\p_A: \Lambda_A \to \Lambda$, the \textit{parent function}.
    \end{dfn}

    In the directed graph case, the moves \textbf{(I),(O),(R),} and \textbf{(S)} are adjustments. Interestingly $\p_S/\p_R$ are injective and $\p_I/\p_O$ are surjective.

    \begin{dfn}
        A higher-rank graph move, \textbf{(Sp)}, is called a \textit{splice} if for all pairs $(\Lambda, w)$ satisfying $H_{Sp}$ there exists a $\Lambda_{Sp}$-realization of $\Lambda$, $\varphi_{Sp}: \Lambda \to \Lambda_{Sp}$.
    \end{dfn}

    In the directed graph case, the moves \textbf{(C)} and \textbf{(P)} are splices. Additionally, $k$-morphs which are defined in \cite{quasimorph1} are closely related to splices.

    \begin{prop}
        Fixing a $k$-graph, $\Gamma$, we may define the higher-rank graph splice, Link $\Gamma$. A pair $(\Lambda,w)$ satisfies $H_{L(\Gamma)}$ if there exists a $\Lambda-\Gamma$ morph, $X$. Link $\Gamma$ then sends $(\Lambda,w)$ to the linking $(k+1)$-graph $\Sigma$ \cite[Definition 4.3]{quasimorph1}.
    \end{prop}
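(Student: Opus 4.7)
The goal is to verify that Link $\Gamma$ meets the definition of a splice, so unwinding the definition we must produce, for each pair $(\Lambda,w)$ admitting a $\Lambda$-$\Gamma$ morph $X$, a $\Sigma$-realization of $\Lambda$: namely a category morphism $\varphi_{L(\Gamma)}: \Lambda \to \Sigma$ together with a functor $\D: \Sigma \to \Z^k$ satisfying $\D \circ \varphi_{L(\Gamma)} = d_\Lambda$.

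The plan is to pull both functors directly out of the linking-graph construction in \cite[Definition 4.3]{quasimorph1}. By that construction, $\Sigma^0 = \Lambda^0 \sqcup \Gamma^0$ and $\Sigma$ contains $\Lambda$ and $\Gamma$ as subcategories whose degree functors factor through the canonical inclusions $\N^k \hookrightarrow \N^{k+1}$ (padding with a $0$ in the final coordinate), while the new morphisms of degree $e_{k+1}$ are furnished by the morph $X$ and glue the two subcategories together. I would therefore define $\varphi_{L(\Gamma)}$ to be the inclusion $\Lambda \hookrightarrow \Sigma$; that it is a small-category morphism is immediate since $\Lambda$ sits unchanged inside $\Sigma$.

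For the connecting functor, let $\pi:\N^{k+1}\to \Z^k$ be the monoid morphism that forgets the $(k+1)$-th coordinate, and set $\D := \pi \circ d_\Sigma$. Since $d_\Sigma$ is a functor and $\pi$ is a monoid morphism, $\D$ is a functor $\Sigma\to \Z^k$. For $\lambda\in\Lambda$, the construction of $\Sigma$ gives $d_\Sigma(\varphi_{L(\Gamma)}(\lambda)) = (d_\Lambda(\lambda),0)$, hence $\D(\varphi_{L(\Gamma)}(\lambda)) = d_\Lambda(\lambda)$, as required.

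The main obstacle is essentially already resolved by \cite{quasimorph1}: the heavy lifting of producing the $(k+1)$-graph $\Sigma$ and verifying the factorization property is done there, so what remains is the notational task of reading off the inclusion $\varphi_{L(\Gamma)}$ and the coordinate-forgetting functor $\D$. Alternatively, one could sidestep the explicit description entirely by invoking Proposition \ref{inj/quasi-prop}: the inclusion $\Lambda \hookrightarrow \Sigma$ is an $\omega$-quasimorphism for the injective monoid map $\omega:\N^k\hookrightarrow \N^{k+1}$, and Proposition \ref{inj/quasi-prop} then supplies the desired $\Sigma$-realization abstractly. The only minor subtlety is ensuring that the basis vector of the new color is indexed in the $(k+1)$-st position, but by the remark following the definition of quasimorphism this can be arranged up to a harmless $\alpha$-quasi-isomorphism.
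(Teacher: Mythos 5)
Your proposal is correct and follows the same route as the paper, which simply observes that the result is an immediate consequence of the natural inclusion $\Lambda \into \Sigma$ from \cite[\S 4]{quasimorph1}; you have merely made explicit the realization data $(\varphi_{L(\Gamma)},\D)$ that the paper leaves implicit. Your alternative via Proposition \ref{inj/quasi-prop} is also consistent with the paper's own framework, since the inclusion is an $\omega$-quasimorphism for the injective coordinate embedding $\omega:\N^k\to\N^{k+1}$.
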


    \begin{proof}
        This is an immediate consequence of the natural inclusion $\Lambda \into \Sigma$ established in \cite[\S 4]{quasimorph1}. 
    \end{proof}

    In the work of \cite{compclass}, directed graph moves were defined on the set of vertices and edges. In an effort to mirror this construction, \cite{efgggp} defined their higher-rank graph moves in terms of the $1$-skeleton. The following lemma is meant to better understand how a higher-rank graph move can be constructed geometrically from $1$-skeletons.

    \begin{lem}
    \label{1-skel-realiz}
        Let $G$ be a directed graph with an edge coloring in $\ell$ colors, $d_G: G^1\to E_\ell$. Let $(\Lambda,d_\Lambda)$ be a $k$-graph with functors, $\varphi: G^1 \to \Lambda$ and $\D: \Lambda \to \Z^\ell$ such that $R\varphi = d_G$ on $G^1$. If $(\im(\varphi),\sim, R)$ satisfies $(KG2-3)$ then there exists an equivalence relation, $\sim_\varphi$ such that $G^*/\sim_\varphi$ is a well-defined $\ell$-graph. 
    \end{lem}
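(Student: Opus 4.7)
The plan is to pull the equivalence relation back through $\varphi$: extend both $\varphi$ and $d_G$ to the path category $G^*$, set $\mu \sim_\varphi \nu$ exactly when their $\varphi$-images agree under $\sim$, and then verify the axioms $(KG0)$–$(KG3)$ of Proposition \ref{kg-rel-prop}. Specifically, extend $d_G$ to a functor $d_G \co G^* \to \N^\ell$ in the obvious way and extend $\varphi$ to a functor $\varphi \co G^* \to \im(\varphi)^*$ by $\varphi(\lambda_n \cdots \lambda_1) := \varphi(\lambda_n) \cdots \varphi(\lambda_1)$. Since $R\varphi = d_G$ on the generating set $G^1$ and both are functors, the identity persists on all of $G^*$. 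Declaring $\mu \sim_\varphi \nu$ iff $\varphi(\mu) \sim \varphi(\nu)$ inherits an equivalence relation on $G^*$ from $\sim$.

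Next I would verify each of $(KG0)$–$(KG3)$ for $(G^*, \sim_\varphi, d_G)$ by lifting the corresponding properties of $\sim$ through $\varphi$. Axiom $(KG0)$ is immediate from multiplicativity of $\varphi$ combined with $(KG0)$ for $\sim$. For $(KG1)$, if $\lambda \in G^1$ and $\lambda \sim_\varphi \mu$, then $\varphi(\lambda) \in \im(\varphi)$ is a single edge of color $d_G(\lambda) \in E_\ell$, so degree-preservation of $\sim$ forces $\varphi(\mu)$ to also be a single edge in $\im(\varphi)$; then $(KG1)$ for $\sim$ gives $\varphi(\mu) = \varphi(\lambda)$, and pairing with the source/range data of $G$ yields $\mu = \lambda$. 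For $(KG2)$, given composable distinct-color $\lambda, \mu \in G^1$, apply the hypothesis on $(\im(\varphi), \sim, R)$ to $\varphi(\lambda), \varphi(\mu)$ to produce the unique commuting pair in $\im(\varphi)$, and lift these back to elements of $G^1$ via the surjection $\varphi|_{G^1} \co G^1 \onto \im(\varphi)$. Axiom $(KG3)$ is handled analogously by lifting the pairwise associativity of $\sim$ through $\varphi$. Once all four axioms hold, Proposition \ref{kg-rel-prop} immediately yields that $(G^*/\sim_\varphi, d_G)$ is a well-defined $\ell$-graph.

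The main obstacle is the lifting step in $(KG1)$ and $(KG2)$: the uniqueness of the commuting pair guaranteed by $(KG2)$ at the level of $\im(\varphi)$ must translate to uniqueness of $\sim_\varphi$-classes in $G^1$. If $\varphi|_{G^1}$ is injective onto $\im(\varphi)$ this is painless; otherwise one must verify that any fibers of $\varphi|_{G^1}$ are precisely the $\sim_\varphi$-classes of single edges, so that lifting is well-defined up to $\sim_\varphi$. A secondary concern is that $(KG0)$ for $\sim$ itself is not explicitly among the hypotheses — it must be inferred from $\sim$ being an equivalence relation defined on the path category $\im(\varphi)^*$ and hence implicitly respecting the composition that makes $(KG0)$ meaningful.
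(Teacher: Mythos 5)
Your overall strategy matches the paper's: pull the factorization data back through $\varphi$, check the axioms of Proposition \ref{kg-rel-prop}, and conclude. The one genuine gap is in your treatment of $(KG1)$. You define $\mu \sim_\varphi \nu$ uniformly by $\varphi(\mu)\sim\varphi(\nu)$, including on single edges, and then assert that $(KG1)$ for $\sim$ together with ``the source/range data of $G$'' forces $\mu=\lambda$. That inference fails whenever $\varphi|_{G^1}$ is not injective: two distinct parallel edges of $G$ with the same source, range, and color can have the same image under $\varphi$, in which case your relation identifies them and $(KG1)$ is violated --- the source/range data cannot distinguish parallel edges. You flag this yourself as an ``obstacle'' to be checked, but you leave it unresolved, and there is nothing in the hypotheses of the lemma that rules it out. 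The paper avoids the problem by fiat: it declares $\sim_\varphi$ on $G^0$ and $G^1$ to be literal equality ($f\sim_\varphi g$ iff $f=g$) and only uses the $\varphi$-pullback for paths of length at least $2$. With that two-case definition, $(KG1)$ is automatic and your argument for the remaining axioms goes through essentially unchanged. So the repair is small, but as written your proof of $(KG1)$ contains a step that would fail.

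Two smaller remarks. First, your ``secondary concern'' about $(KG0)$ for $\sim$ is a non-issue: since $\varphi$ lands in the $k$-graph $\Lambda$ itself, $\varphi(\lambda)$ is a genuine morphism of $\Lambda$ and $(KG0)$ for $\sim_\varphi$ follows from functoriality of $\varphi$ alone, which is exactly how the paper argues it. Second, your observation that lifting the unique commuting pair of $(KG2)$ from $\im(\varphi)$ back to $G^1$ requires control of the fibers of $\varphi|_{G^1}$ is a fair point --- the paper's proof passes over this silently by invoking the hypothesis that $(\im(\varphi),\sim,\D)$ satisfies $(KG2$--$3)$ --- but under the corrected definition of $\sim_\varphi$ on edges this reduces to the same injectivity question, which is satisfied in the paper's intended application where $\varphi$ is the parent function of an adjustment.
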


    \begin{proof}
        Since our intention is to transform $G$ into a higher-rank graph, we must construct $\sim_\varphi$ such that it satisfies the $KG$ conditions. First, we define $\sim_\varphi$ on $G^0$ and $G^1$ as $f\sim_\varphi g$ if and only if $f=g$. For paths $\lambda,\nu\in G^n$ for $n\geq 2$, we define $\lambda\sim_\varphi \nu$ if and only if $\varphi(\lambda)= \varphi(\nu)$. By construction, $\sim_\varphi$ satisfies $(KG1)$. Additionally under this construction for all $\lambda,\nu\in G^*$ $\lambda\sim_\varphi \nu$ implies $\varphi(\lambda)=\varphi(\nu)$.

        To show $(KG0)$, let $\lambda_1,\lambda_2,p_1,p_2\in G^*$, $p_1\sim_{\varphi}\lambda_1$, and $p_2\sim_{\varphi}\lambda_2$. Consider $\varphi(\lambda_1\lambda_2)$. Since $\varphi$ is a functor, we obtain 
        \[ \varphi(\lambda_1\lambda_2) = \varphi(\lambda_1)\varphi(\lambda_2) = \varphi(p_1)\varphi(p_2)=\varphi(p_1p_2).\]

        Finally, by hypothesis $\sim_\varphi$ satisfies $(KG2-3)$. We conclude that $G^*/\sim_{\varphi}$ is an $\ell$-graph. Lastly, this construction induces a natural $\Lambda$-realization of $G^*/\sim_\varphi$
    \end{proof}

    With Lemma \ref{1-skel-realiz}, we obtain a methodology for constructing higher-rank graph adjustment. Specifically, we begin with a $k$-graph and alter the $1$-skeleton to obtain a new colored digraph for which there is a natural parent functor. This natural parent function then induces a factorization rule which transforms the colored digraph into the adjusted higher-rank graph. While this methodology was not made explicit in \cite{efgggp} if the reader were to revisit this work they will find that this was used to construct moves and demonstrate that the they are well-defined.

    The moves used in \cite{compclass} and those introduced in \cite{efgggp} were of interest because of their affect on $C^*$-algebras. For this reason, we introduce vocabulary relating higher-rank graph moves to $C^*$-algebraic properties.

    \begin{dfn}
        A higher-rank graph move \textbf{(M)} is called \textit{subtle} if for all pairs $(\Lambda,w)$ satisfying the hypotheses $H_M$ and $\Lambda$ source-free and row-finite there is Morita-equivalence $C^*(\Lambda)\sim_{ME} C^*(\Lambda_M)$.
    \end{dfn}

        In the case of $1$-graphs, it was shown that the $6$ moves of \cite{compclass} were all subtle moves. Additionally, all moves introduced in \cite{efgggp} were shown to be subtle. 

    \begin{prop}
        \label{varphi-prop}
            Suppose that $\Gamma$ and $\Lambda$ are $\ell$ and $k$-graphs respectively. Let $\varphi$ be a $\Lambda$-realization of $\Gamma$, and let $\{t_{\gamma}: \gamma\in \Gamma\}$ and $\{s_{\lambda}: \lambda\in \Lambda\}$ be the generators of $C^*(\Gamma)$ and $C^*(\Lambda)$ respectively. If $\{s_{\varphi(\lambda)}: \lambda\in \Lambda\}$ is a CK $\Gamma$-family, then there exists an injective $*$-homomorphism which we call $\hat{\varphi}: C^*(\Gamma)\into C^*(\Lambda)$ such that $t_{\gamma}\mapsto s_{\varphi(\gamma)}$.
        \end{prop}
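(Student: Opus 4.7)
The plan is to construct $\hat{\varphi}$ via the universal property of $C^*(\Gamma)$ and then pull in the Gauge-Invariant Uniqueness Theorem (Theorem \ref{gauge}) to upgrade surjectivity onto the image to injectivity. By hypothesis, $\{s_{\varphi(\gamma)} : \gamma \in \Gamma\}$ is a Cuntz--Krieger $\Gamma$-family inside $C^*(\Lambda)$, so universality of $C^*(\Gamma)$ immediately produces a $*$-homomorphism $\hat{\varphi}: C^*(\Gamma) \to C^*(\Lambda)$ sending $t_\gamma \mapsto s_{\varphi(\gamma)}$. The content of the proposition is therefore injectivity.

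To apply Theorem \ref{gauge} I need two ingredients: nonvanishing of vertex projections and an equivariant action of $\T^\ell$ on the codomain. For the first, note that $\varphi$ is a functor of small categories and hence sends identity morphisms to identity morphisms, so $\varphi(v) \in \Lambda^0$ for every $v \in \Gamma^0$. Thus $\hat{\varphi}(p_v) = s_{\varphi(v)}$ is a vertex projection in $C^*(\Lambda)$, which is nonzero by the standard faithfulness of vertex projections in $C^*(\Lambda)$ (or directly from (CK0) and (CK3)).

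For the second ingredient, I use Lemma \ref{betalem} applied to the functor $\D_\varphi : \Lambda \to \Z^\ell$ provided by the definition of a $\Lambda$-realization. This produces an action $\beta: \T^\ell \to \Aut(C^*(\Lambda))$ satisfying
\[
\beta_z(s_\mu s_\nu^*) = z^{\D_\varphi(\mu) - \D_\varphi(\nu)} s_\mu s_\nu^*.
\]
Writing $\alpha$ for the canonical gauge action of $\T^\ell$ on $C^*(\Gamma)$, I check the intertwining on generators:
\[
\beta_z(\hat{\varphi}(t_\gamma)) = \beta_z(s_{\varphi(\gamma)}) = z^{\D_\varphi(\varphi(\gamma))} s_{\varphi(\gamma)} = z^{d_\Gamma(\gamma)} s_{\varphi(\gamma)} = \hat{\varphi}(\alpha_z(t_\gamma)),
\]
using precisely the defining property $\D_\varphi \varphi = d_\Gamma$ of a $\Lambda$-realization. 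Extending to adjoints is automatic since $\beta_z$ and $\hat{\varphi}$ are $*$-homomorphisms, and the $*$-algebra generated by $\{t_\gamma\}$ is dense in $C^*(\Gamma)$, so $\beta \hat{\varphi} = \hat{\varphi} \alpha$ on all of $C^*(\Gamma)$.

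With vertex projections nonzero and gauge equivariance in hand, Theorem \ref{gauge} yields that $\hat{\varphi}$ is injective, completing the proof. The only real subtlety is ensuring that the action produced by Lemma \ref{betalem} intertwines correctly with the gauge action on the domain; this is guaranteed by the realization condition $\D_\varphi \varphi = d_\Gamma$, which is exactly why the notion of $\Lambda$-realization was designed with a compatible degree-lift $\D_\varphi$ in the first place. I expect no significant obstacle beyond confirming that the CK $\Gamma$-family hypothesis indeed produces a well-defined $\hat{\varphi}$ (routine from universality), and recording that $\varphi$ preserves vertices.
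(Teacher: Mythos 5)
Your proposal is correct and follows essentially the same route as the paper: universality gives $\hat{\varphi}$, functoriality of $\varphi$ gives nonvanishing vertex projections, and Lemma \ref{betalem} applied to $\D_\varphi$ produces the intertwining action needed for Theorem \ref{gauge}. The only (cosmetic) difference is that you correctly record the action as one of $\T^\ell$ rather than $\T^k$, matching the gauge action on the $\ell$-graph algebra $C^*(\Gamma)$.
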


        \begin{proof}
            By hypothesis, $\{s_{\varphi(\gamma)}: \gamma\in \Gamma\}$ is a CK $\Gamma$-family. In particular, the universal property of $C^*(\Gamma)$ gives a $*$-homomorphism $\hat{\varphi}: C^*(\Gamma) \to C^*(\Lambda)$ such that $t_{\gamma}\mapsto s_{\varphi(\gamma)}$. Since $\varphi$ is a functor it sends vertices to vertices, in particular, $\hat{\varphi}$ sends no vertex projections to $0$. Let $\beta: \T^k\to \Aut(C^*(\Lambda))$ as defined in Lemma \ref{betalem} with $\D_\varphi$ and observe that
            \[ \beta_z(s_{\varphi(\mu)}s_{\varphi(\nu)}^*) = z^{\D_\varphi \varphi (\mu)-\D_\varphi \varphi (\nu)} s_{\varphi(\mu)}s_{\varphi(\nu)}^* = z^{d_\Gamma(\mu)-d_\Gamma(\nu)} \hat\varphi(s_{\mu}s_{\nu}^*).\]
            By Theorem \ref{gauge} we conclude that $\hat{\varphi}$ is injective.
        \end{proof}

        Proposition \ref{varphi-prop} yields a clear methodology for demonstrating that an adjustment, \textbf{(A)}, is subtle. Since \textbf{(A)} comes equipped with a $\Lambda$-Realization of $\Lambda_A$, it is only necessary to show that the parent function produces a CK $\Gamma$-function then apply Proposition \ref{morita}.

        As discussed in the introduction, both the category of higher-rank graphs and the category of $C^*$-algebras are monoidal. Thus we need a way of discussing how this information is preserved by graph moves.
        \begin{thm}
            \cite[Theorem 1.2]{morita-stab} \textbf{(Brown-Green-Rieffel Theorem)} Let $\A_1$ and $\A_2$ be $C^*$-algebras with countable approximate identities. Then $\A_1\otimes\K\cong \A_2\otimes\K$ if and only if $\A_1$ is Morita equivalent to $\A_2$ (written $\A_1\sim_{ME}\A_2$).
        \end{thm}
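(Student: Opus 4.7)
The plan is to handle the two directions separately, using the linking algebra of a Morita equivalence bimodule as the central construction. For the easier direction, that $\A_1\otimes\K\cong\A_2\otimes\K$ implies $\A_1\sim_{ME}\A_2$, I would invoke the standard fact that any $C^*$-algebra $\A$ with a countable approximate identity is Morita equivalent to its stabilization $\A\otimes\K$: indeed $\A$ embeds as the full hereditary subalgebra $\A\otimes e_{11}$, and the bimodule $\A\otimes\ell^2$ implements the equivalence. Then transitivity of Morita equivalence gives $\A_1\sim_{ME}\A_1\otimes\K\cong\A_2\otimes\K\sim_{ME}\A_2$.

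For the reverse direction, starting from an $\A_1$--$\A_2$ imprimitivity bimodule $X$, I would form the linking algebra
\[ L(X) = \begin{pmatrix} \A_1 & X \\ \widetilde X & \A_2 \end{pmatrix}, \]
in which $\A_1$ and $\A_2$ sit as complementary full corners via diagonal projections $p_1, p_2 \in M(L(X))$ satisfying $p_i L(X) p_i \cong \A_i$. The goal is to show that both corners have the same stabilization as $L(X)$ itself, from which $\A_1\otimes\K\cong L(X)\otimes\K\cong\A_2\otimes\K$ follows immediately.

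This reduces to the following technical heart of the theorem: if $\B$ is a $\sigma$-unital $C^*$-algebra and $p\in M(\B)$ is a full projection with $p\B p$ also $\sigma$-unital, then $p\B p\otimes\K\cong\B\otimes\K$. The crux is to build a partial isometry $v\in M(\B\otimes\K)$ with $v^*v = p\otimes 1_\K$ and $vv^* = 1_{M(\B)}\otimes 1_\K$, witnessing a Murray--von Neumann equivalence after stabilization. This is precisely where the countable approximate identity hypothesis enters: $\sigma$-unitality ensures that the Hilbert $\B$-modules $p\B$ and $\B$ are countably generated, and Kasparov's stabilization theorem then shows that both absorb the standard module identically, yielding $v$.

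The main obstacle is the construction of $v$ inside the multiplier algebra, which rests on Kasparov absorption and careful management of multiplier-level projections; even ensuring that the projections in $M(L(X))$ behave well when tensored with $\K$ requires nontrivial structure theory. Rather than reprove these foundational results, I would cite them and assemble the argument as above; this is the route the author has already taken by quoting \cite{morita-stab}.
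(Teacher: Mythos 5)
Your outline is correct, but note that the paper does not prove this statement at all: it is the classical Brown--Green--Rieffel stable isomorphism theorem, imported verbatim by citation, and your closing remark that one should simply cite the foundational results is exactly what the author does. The argument you sketch (the easy direction via $\A\sim_{ME}\A\otimes\K$ and transitivity; the hard direction via the linking algebra, reduction to Brown's theorem on full corners of $\sigma$-unital algebras, and Kasparov stabilization to produce the multiplier partial isometry $v$ with $v^*v=p\otimes 1$ and $vv^*=1\otimes 1$) is the standard modern proof and is sound as a sketch.
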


        Higher-rank graph $C^*$-algebras are countably generated and thus have a countable approximate identity. This yields the following corollary.
        
        \begin{cor}
        \label{morita-rob}
            Let $\Omega$, $\Lambda$, and $\Gamma$ be source free, row-finite, $j$, $k$, and $\ell$-graphs respectively. If $C^*(\Lambda)\sim_{ME} C^*(\Gamma)$, then $C^*(\Lambda \times \Omega)\sim_{ME} C^*(\Gamma\times\Omega)$
        \end{cor}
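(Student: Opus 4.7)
The plan is to leverage the Brown-Green-Rieffel theorem to convert the Morita equivalence hypothesis into a stable isomorphism, then use Corollary \ref{tensor} to rewrite the products as tensor products, and finally reverse the procedure after tensoring with $C^*(\Omega)$.

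First I would invoke the Brown-Green-Rieffel theorem on the hypothesis $C^*(\Lambda) \sim_{ME} C^*(\Gamma)$. Since higher-rank graph $C^*$-algebras are separable (they are generated by countable sets of partial isometries), they have countable approximate identities, so the theorem applies and yields a $*$-isomorphism $C^*(\Lambda) \otimes \K \cong C^*(\Gamma) \otimes \K$. Next I would tensor both sides of this isomorphism with $C^*(\Omega)$, obtaining
\[ C^*(\Lambda) \otimes \K \otimes C^*(\Omega) \cong C^*(\Gamma) \otimes \K \otimes C^*(\Omega). \]

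Then I would use the commutativity and associativity of the (minimal) tensor product of $C^*$-algebras to rearrange the factors as
\[ (C^*(\Lambda) \otimes C^*(\Omega)) \otimes \K \cong (C^*(\Gamma) \otimes C^*(\Omega)) \otimes \K. \]
Applying Corollary \ref{tensor} on each side identifies these tensor products with the $C^*$-algebras of the product higher-rank graphs, giving
\[ C^*(\Lambda \times \Omega) \otimes \K \cong C^*(\Gamma \times \Omega) \otimes \K. \]
Finally, another appeal to the Brown-Green-Rieffel theorem converts this stable isomorphism back into the desired Morita equivalence $C^*(\Lambda \times \Omega) \sim_{ME} C^*(\Gamma \times \Omega)$.

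The proof is essentially a bookkeeping exercise, so I do not anticipate any serious obstacle. The one subtlety worth flagging is ensuring that tensoring an isomorphism with $C^*(\Omega)$ is well-defined at the level of $C^*$-tensor products; this is automatic here since higher-rank graph $C^*$-algebras are nuclear, so the minimal and maximal tensor products coincide and tensoring is functorial on isomorphisms.
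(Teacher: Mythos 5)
Your proposal is correct and follows essentially the same route as the paper: apply Brown--Green--Rieffel to get a stable isomorphism, tensor with $C^*(\Omega)$, use commutativity/associativity of the tensor product together with Corollary \ref{tensor} to identify the result with $C^*(\Lambda\times\Omega)\otimes\K\cong C^*(\Gamma\times\Omega)\otimes\K$, and convert back to Morita equivalence. Your remark on nuclearity is a small extra justification the paper leaves implicit, but the argument is the same.
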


        \begin{proof}
            As shown in \cite[Corollary 3.5]{kp}, $C^*(\Lambda\times\Omega)\cong C^*(\Lambda)\otimes C^*(\Omega)$. This coupled with the Brown-Green-Rieffel Theorem and the commutativity/associativity of tensor products gives:
            \[ C^*(\Lambda \times \Omega)\otimes \K \cong (C^*(\Lambda)\otimes\K) \otimes \Omega \cong (C^*(\Gamma)\otimes\K) \otimes \Omega \cong C^*(\Gamma \times \Omega)\otimes \K.\qedhere \]
        \end{proof}

        In short, the corollary above demonstrates that there is a second helpful property for graph moves to have. We define this property as robustness.
        
        \begin{dfn}
            \label{robdef}
            A higher-rank adjustment/splice, \textbf{(M)} is said to be \textit{robust} if for any pair $(\Lambda, w)$ satisfying $H_M$ and any $j$-graph $\Omega$ the product graph satisfies $H_M$ and there exists an isomorphism making either of these diagrams commute (left for adjustments right for splices).
\begin{center}

            \begin{tikzcd}
                \Lambda \times \Omega & \ar[l] \Lambda_M \times \Omega & & \Lambda \times \Omega \ar[r] \ar[d] & \Lambda_M \times \Omega\\
                (\Lambda\times\Omega)_M\ar[u]\ar[<->,ur] & & & (\Lambda\times\Omega)_M \ar[<->,ur] & 
            \end{tikzcd}
            \end{center}
        \end{dfn}

        Notably, robustness is a property unique to higher-rank graph moves. Specifically, taking products increases the rank of the graph so the hypothesis set must accommodate all graph ranks.  It can be checked that of the higher-rank graph moves in \cite{efgggp} only the moves \textbf{(D)} and \textbf{(S)} are robust. This is a desirable property for higher-rank graph moves preserving Morita equivalence to have. Specifically, it demonstrates that the move does not imply product graph Morita equivalences that it fails to account for.

	\section{Neighborhoods of a Vertex and Reducibility}

        A simple way to ensure robustness of a higher rank graph move is to define the hypothesis set, $H_M$, with respect to only a subset of the $k$ colors in the graph. We call this a color set, $B\subseteq E$. This will ensure that after the basis grows when taking a product $H_M$ is still satisfied. 

        In this section, we will lay the groundwork necessary to define hypothesis sets in this way.  In particular, we will start from a pair $(\Lambda, w)$ and bifurcate the $1$-skeleton into $\Lambda^B$ and $\Lambda^\E$ (where $\E:=E\setminus B$). We then adjust these pieces individually with the goal of eventually recombining them into our adjusted higher-rank graph. 
\begin{center}

        \begin{tikzcd}
            \Lambda^E \ar[r]\ar[dr] & \Lambda^\E \ar[r] & (\Lambda^\E)' \ar[rd]& \\
             & \Lambda^B \ar[r] & \Lambda_M^B \ar[r] & \Lambda_M^E
        \end{tikzcd}
            
\end{center}

The focus of this section is to understand the particulars of this diagram. Our eventual goal is to define a graph move reduction for pairs $(\Lambda,w)$. Just as in \cite{move-eq}, we will need to establish criteria that make a vertex $w$ reducible. This will be the start of our hypothesis set. 

	\begin{dfn}
	\label{red}
		We say that $w$ is \textit{reducible} with color set $\emptyset\neq B\subseteq E$ if there exists $v\in \Lambda^0$ such that the following hold:
		\begin{align*}
		    \mu\in w\Lambda^E v &\implies d(\mu)\in B &\forall b\in B, \hspace{2mm} w\Lambda^b&=\{f_b\}\\
			w& \notin r(\Lambda^Bw) & 			s(w\Lambda^B) &= \{v\}.
		\end{align*}
        We call $w\Lambda^B$ the set of bridge edges and $\Lambda^B w$ the set of co-bridge edges.
	\end{dfn}

    Classically, the move reduction was used to remove a reducible vertex and its bridge edges. However, in our current state we cannot easily remove $w$. If we did, it would make recombining impossible. To account for this, we must change $\Lambda^\E$.
	
	\begin{dfn}
	    For a $k$-graph, $\Lambda$, a vertex $w\in \Lambda^0$, and a color set, $\E\subseteq E$, we define the neighborhood about $w$ in color set $\E$,  $U_w^\E$, as the connected component of $\Lambda^\E$ containing $w$. 
	\end{dfn}
	
	\begin{figure}[H]
	\centering
	\stepcounter{thm}
	\scalebox{.6}{
	\begin{tikzpicture}[roundnode/.style={circle, draw=black!60, fill=gray!5, very thick, minimum size=3mm}]
            \node[roundnode]        (w)                             {y};
            \node[roundnode]        (x)       [below=4 of w]  {v};
            \node[roundnode]        (v)       [right= 4 of w]  {x};
            \node[roundnode]        (y)     [right=4 of x]      {w};
            \node[]                 (ur)    [above right= .75 of v]  {};
            \node[]                 (ul)    [above left= .75 of w]  {};
            \node[]                 (dr)    [below right= .75 of y]  {};
            \node[]                 (dl)    [below left= .75 of x]  {};

            \node[roundnode]        (w2)    [right = 4 of v]   {y};
            \node[roundnode]        (x2)       [below=4 of w2]  {v};
            \node[roundnode]        (v2)       [right= 4 of w2]  {x};
            \node[roundnode]        (y2)     [right=4 of x2]      {w};
            \node[]                 (ur2)    [above right= .75 of v2]  {};
            \node[]                 (ul2)    [above left= .75 of w2]  {};
            \node[]                 (dr2)    [below right= .75 of y2]  {};
            \node[]                 (dl2)    [below left= .75 of x2]  {};
            
            \node[]                 (u1)    [above right = 2 and 2 of v] {};
            \node[]                 (d1)    [below right = 2 and 2 of y] {};

            \node[scale=2]                         [above right = 2 and 2 of w] {$\Lambda^E$};
            \node[scale=2]                         [above right = 2 and 1 of w2] {$\Lambda^\E$};
            
            \draw[->, line width = 1.5, draw = blue, dotted] (v.west) to[bend left = 10] (w.east);
            \draw[->, line width = 1.5, draw=black] (v.west) to[bend right = 10] (w.east);
            
            \draw[->, line width = 1.5, draw = blue, dotted] (w.south) to[bend left = 10] (x.north);
            \draw[->, line width = 1.5, draw=black] (w.south) to[bend right = 10] (x.north);
            
            \draw[->, line width = 1.5, draw = blue, dotted] (x.east) to[bend left = 10] (y.west);
            \draw[->, line width = 1.5, draw=black] (x.east) to[bend right = 10] (y.west);
            
            \draw[->, line width = 1.5, draw = blue, dotted] (y.north) to[bend left = 10] (v.south);
            \draw[->, line width = 1.5, draw=black] (y.north) to[bend right = 10] (v.south);
            
            \draw[->, line width = 1.5, draw = red, dashed] (v.east) to[ in=315,bend right = 70] (ur) to[out=135,bend right = 70] (v.north);
            \draw[->, line width = 1.5, draw = red, dashed] (x.west) to[in=135, bend right = 70] (dl) to[out=315, bend right=70] (x.south);
            \draw[->, line width = 1.5, draw = red, dashed] (y.south) to[in= 225, bend right = 70] (dr) to[out=45, bend right=70] (y.east);
            \draw[->, line width = 1.5, draw = red, dashed] (w.north) to[in=45, bend right = 70] (ul) to[out=225, bend right=70] (w.west);
            
            \draw[->, line width = 1.5, draw = red, dashed] (x2.west) to[in=135, bend right = 70] (dl2) to[out=315, bend right=70] (x2.south);
            \draw[->, line width = 1.5, draw = red, dashed] (y2.south) to[in= 225, bend right = 70] (dr2) to[out=45, bend right=70] (y2.east);
            \draw[->, line width = 1.5, draw = red, dashed] (w2.north) to[in=45, bend right = 70] (ul2) to[out=225, bend right=70] (w2.west);
            \draw[->, line width = 1.5, draw = red, dashed] (v2.east) to[ in=315,bend right = 70] (ur2) to[out=135,bend right = 70] (v2.north);

            \draw[-,dotted, line width=2] (u1) to[] (d1);
            
            \draw[color=green, dotted, line width=4] (y.south east) circle (1);
            
            \node[scale=2, color=green]              [below left = .5 of dr] {$U_w^\E$};
            
            \draw[color=green, dotted, line width=4] (y2.south east) circle (1);
            
            \end{tikzpicture}
        }
    	\caption{In the above figure, $\Lambda$ is a $3$-graph with its $1$-skeleton pictured, and we take $\E$ to be the color red (dashed). }
    	\label{sqfig}
    \end{figure}
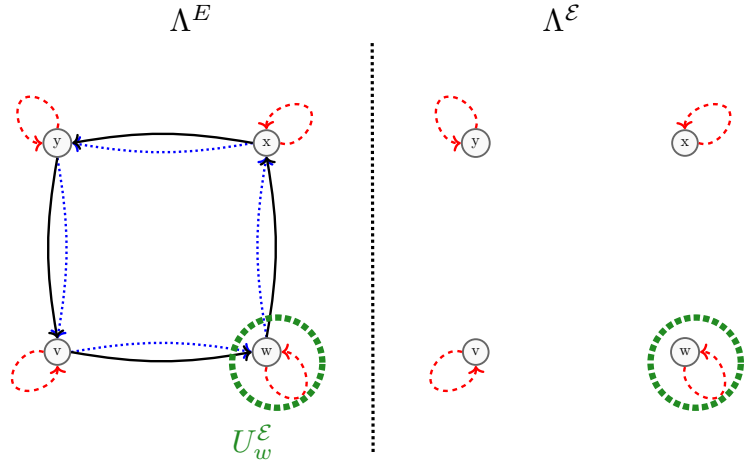

    From this definition, we have found a compelling candidate in $(\Lambda^\E)'=\Lambda^\E\setminus U_w^\E$. This however provokes us to further change $\Lambda^B$ to account for the possibly missing vertices.

    \begin{dfn}
    \label{full-red-def}
	    We say that $w$ is \textit{fully reducible} with color set $B$ if after defining $\E:=E\setminus B$ each $x\in(U_w^\E)^0$ is reducible with color set $B$ and 
     \[ \left(\bigcup_{x\in (U_w^\E)}\Lambda^Bx \right) \cap \left( \displaystyle{\bigcup_{x\in (U_w^\E)}}x\Lambda^B \right) = \emptyset \]
      i.e. the bridge edges and co-bridge edges are disjoint. For any $x\in (U_w^\E)^0$, we utilize the notation $f_{b,x}$ for the unique bridge edge of color $b\in B$ such that $r(f_{b,x})=x$.
	\end{dfn}

    The disjoint condition can be thought of as an extension of the exclusion of loops in Definition \ref{red}. In particular, the following theorem demonstrates that the only way into $U_w^\E$ is a bridge edge and the only way out is a co-bridge edge.

 \begin{prop}
 \label{prop-in-out}
     Let $w$ be fully reducible. If $\theta$ is a co-bridge edge, then $r(\theta)\notin U_w^\E$.
 \end{prop}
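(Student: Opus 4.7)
The statement is essentially a direct contrapositive of the disjointness clause in Definition \ref{full-red-def}, so I would prove it by contradiction, arguing that if $r(\theta)$ landed inside $U_w^\E$, the edge $\theta$ would simultaneously qualify as a bridge edge and a co-bridge edge, violating the hypothesis.

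More concretely, suppose $\theta$ is a co-bridge edge, so by definition $\theta \in \Lambda^B x$ for some $x \in (U_w^\E)^0$. In particular $d(\theta) \in B$ and $s(\theta) = x \in (U_w^\E)^0$. Assume toward a contradiction that $r(\theta) \in (U_w^\E)^0$; call this vertex $y$. Since $d(\theta) \in B$ and $r(\theta) = y$, we have $\theta \in y\Lambda^B$. Because $y \in (U_w^\E)^0$, it follows that
\[
\theta \in \bigcup_{y' \in (U_w^\E)^0} y'\Lambda^B.
\]
But simultaneously $\theta \in \bigcup_{x' \in (U_w^\E)^0} \Lambda^B x'$ by the co-bridge assumption. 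This places $\theta$ in the intersection of the two unions, directly contradicting the disjointness requirement of Definition \ref{full-red-def}. Therefore $r(\theta) \notin U_w^\E$.

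There is no real obstacle here beyond carefully unpacking the notation: the only potential point of confusion is keeping straight the asymmetric roles of $x\Lambda^B$ (edges ending at $x$, the bridge edges) versus $\Lambda^B x$ (edges starting at $x$, the co-bridge edges). Once these are separated, the disjointness condition does essentially all of the work, and the argument is a one-line contradiction. No appeal to reducibility of the individual vertices, to the factorization property, or to any structure of $\Lambda^\E$ is needed beyond the definition of $U_w^\E$ as a subset of $\Lambda^0$ (via its vertex set).
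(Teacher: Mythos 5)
Your proof is correct and is essentially the paper's own argument: assume $r(\theta)\in U_w^\E$, observe that $\theta$ then lies in $r(\theta)\Lambda^B$ and hence is simultaneously a bridge edge and a co-bridge edge, and contradict the disjointness clause of Definition \ref{full-red-def}. The only (cosmetic) difference is that the paper routes through the reducibility of $r(\theta)$ before naming $\theta$ a bridge edge, whereas you correctly note that the disjointness condition alone suffices.
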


 \begin{proof}
     Suppose $r(\theta)\in U_w^\E$. By hypothesis, $r(\theta)$ is reducible with color set $B$. Since $\theta \in r(\theta)\Lambda^B$, we conclude that $\theta$ is a bridge edge, a contradiction.
 \end{proof}

Returning to Figure \ref{sqfig}, we notice that $w$ is a reducible vertex with color set black (solid) and blue (dotted). Further, with $\E$ taken as red (dashed), $(U_w^{\E})^0=\{w\}$, thus $w$ is fully reducible. This example is of particular note because it is the $1$-skeleton of a product graph.

\section{The Move Reduction}
    Our current schematic for the higher-rank graph adjustment, \textbf{(R)}, is represented in the figure below.

    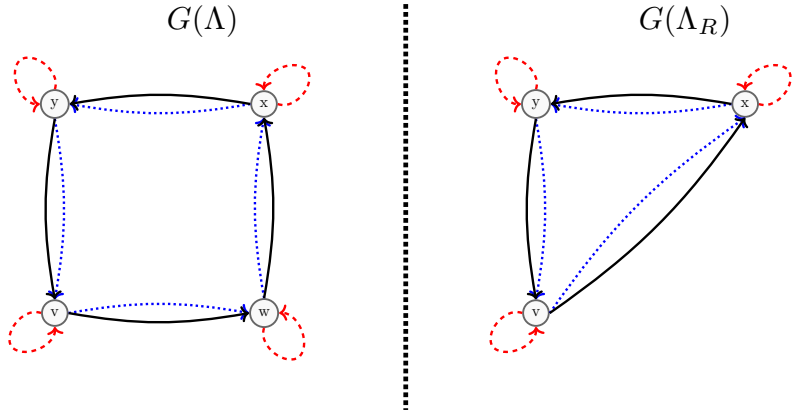
\begin{figure}[H]
    \centering
    \stepcounter{thm}
    
    \scalebox{.6}{
    \begin{tikzpicture}[roundnode/.style={circle, draw=black!60, fill=gray!5, very thick, minimum size=3mm}]
        \node[roundnode]        (w)                             {y};
        \node[roundnode]        (x)       [below=4 of w]  {v};
        \node[roundnode]        (v)       [right= 4 of w]  {x};
        \node[roundnode]        (y)     [right=4 of x]      {w};
        \node[]                 (ur)    [above right= .75 of v]  {};
        \node[]                 (ul)    [above left= .75 of w]  {};
        \node[]                 (dr)    [below right= .75 of y]  {};
        \node[]                 (dl)    [below left= .75 of x]  {};
        
        \draw[->, line width = 1.5, draw = blue, dotted] (v.west) to[bend left = 10] (w.east);
        \draw[->, line width = 1.5, draw=black] (v.west) to[bend right = 10] (w.east);
        
        \draw[->, line width = 1.5, draw = blue, dotted] (w.south) to[bend left = 10] (x.north);
        \draw[->, line width = 1.5, draw=black] (w.south) to[bend right = 10] (x.north);
        
        \draw[->, line width = 1.5, draw = blue, dotted] (x.east) to[bend left = 10] (y.west);
        \draw[->, line width = 1.5, draw=black] (x.east) to[bend right = 10] (y.west);
        
        \draw[->, line width = 1.5, draw = blue, dotted] (y.north) to[bend left = 10] (v.south);
        \draw[->, line width = 1.5, draw=black] (y.north) to[bend right = 10] (v.south);
        
        \draw[->, line width = 1.5, draw = red, dashed] (v.east) to[ in=315,bend right = 70] (ur) to[out=135,bend right = 70] (v.north);
        \draw[->, line width = 1.5, draw = red, dashed] (x.west) to[in=135, bend right = 70] (dl) to[out=315, bend right=70] (x.south);
        \draw[->, line width = 1.5, draw = red, dashed] (y.south) to[in= 225, bend right = 70] (dr) to[out=45, bend right=70] (y.east);
        \draw[->, line width = 1.5, draw = red, dashed] (w.north) to[in=45, bend right = 70] (ul) to[out=225, bend right=70] (w.west);
        
        \node[roundnode]        (w1)       [right=10 of w]                      {y};
        \node[roundnode]        (x1)       [below=4 of w1]  {v};
        \node[roundnode]        (v1)       [right= 4 of w1]  {x};
        \node[]                 (ur1)    [above right= .75 of v1]  {};
        \node[]                 (ul1)    [above left= .75 of w1]  {};
        \node[]                 (dl1)    [below left= .75 of x1]  {};
        
        \draw[->, line width = 1.5, draw = blue, dotted] (v1.west) to[bend left = 10] (w1.east);
        \draw[->, line width = 1.5, draw=black] (v1.west) to[bend right = 10] (w1.east);
        
        \draw[->, line width = 1.5, draw = blue, dotted] (w1.south) to[bend left = 10] (x1.north);
        \draw[->, line width = 1.5, draw=black] (w1.south) to[bend right = 10] (x1.north);
        
        \draw[->, line width = 1.5, draw = blue, dotted] (x1.east) to[bend left = 10] (v1.south);
        \draw[->, line width = 1.5, draw=black] (x1.east) to[bend right = 10] (v1.south);

        \draw[->, line width = 1.5, draw = red, dashed] (v1.east) to[ in=315,bend right = 70] (ur1) to[out=135,bend right = 70] (v1.north);
        \draw[->, line width = 1.5, draw = red, dashed] (x1.west) to[in=135, bend right = 70] (dl1) to[out=315, bend right=70] (x1.south);
        \draw[->, line width = 1.5, draw = red, dashed] (w1.north) to[in=45, bend right = 70] (ul1) to[out=225, bend right=70] (w1.west);
        
        \node[]     (u)             [above right = 2 and 2.8 of v] {};
        
        \node[]     (d)             [below = 9 of u] {};
        
        \draw[dotted, line width = 3] (u) to (d);
        
        \node[scale=2]      [above right = 1 and 2 of w] {$G(\Lambda)$};
        
        \node[scale=2]      [above right = 1 and 1.8 of w1] {$G(\Lambda_R)$};
        
        \end{tikzpicture}
    }
    
    \caption{Recall from earlier that $w$ was fully reducible with color set blue (dotted) and black (solid). For this reason we proceed by deleting $U_w^\E$ and reducing in $\Lambda^B$ at each of the deleted vertices.}
    \label{redex}
\end{figure}

    Since we have a way of associating an ordered pair $(\Lambda,w)$ with a one skeleton, we may use Proposition \ref{1-skel-realiz} to obtain an adjustment. Specifically, we must define $\D:\Lambda\to \Z^k$ and $\p:G(\Lambda_R) \to \Lambda$ and show that $\im(\p)$ satisfies $(KG2-3)$. 
    
\begin{dfn}
	\label{reduction}
	\textbf{Reduction (R):} Let $\Lambda$ be a $k$-graph. Suppose that $w$ is fully reducible with color set $B$, and suppose that for any $x\in (U_w^\E)^0$ the set $x\Lambda^B$ is a stationary set with color set $B$ \text{(Definition \ref{stat})}. Define the colored digraph $G(\Lambda_R)$ as follows:
	\begin{align*}
		\Lambda_R^0 & = \Lambda^0 \setminus (U_w^\E)^0\\
		\Lambda_R^E & = \Lambda^E \setminus((U_w^\E)^E \cup \{r_B^{-1}(x): x\in (U_{w}^\E)^0\})\\
		r_R(\mu) & =  r(\mu)\\
		s_R(\mu) & = \begin{cases}
			s(f_{b,s(\mu)}), & \text{ if } s(\mu)\in  U_{w}^\E \\
			s(\mu), & \text{ otherwise. }
		\end{cases}
	\end{align*}

	Note that in this construction each edge in $\Lambda_R^E$ shares a name with an edge in $\Lambda$. Thus we may define the natural inclusion $\iota: \Lambda_R^E \to \Lambda^E$, and when necessary this may be extended multiplicatively. Fix a $b\in B$, define $\p: \Lambda_R^E \to \Lambda$:
	\[ \p(\mu)=\begin{cases}
		\iota(\mu) f_{b,s(\iota(\mu))}, & \text{ if } s(\iota(\mu))\in (U_w^\E)^0 \\
		\iota(\mu), & \text{ otherwise. }
	\end{cases} \]
	Extend $\p$ to paths multiplicatively. 

    Define $\D:\Lambda \to \Z^k$:
    \begin{align*}
		\D(\mu)&=d(\mu) \text{, for } \mu\in \Lambda^E \text{ s.t. } s(\mu) \notin U_w^\E \text{ or } d(\mu)\notin B \\
		\D(\mu)& = d(\mu) - b \text{, for } \mu\in \Lambda^E \text{ s.t. } s(\mu) \in U_w^\E \text{ and } d(\mu)\in B \\
		\D(\lambda) & = \sum_{i=1}^{|\lambda|}\D(\lambda_i) \text{, for } \lambda=\lambda_{|\lambda|}\dots\lambda_1\in G(\Lambda)^*\\
		\D(x) & = 0 \text{, for } x\in \Lambda^0.
	\end{align*} 
\end{dfn}

We now present the main theorem of the paper which we will prove at the end of this section.
\begin{thm}
\label{main-thm}
    The graph move, \textbf{(R)}, is a well-defined, subtle, higher-rank graph adjustment. 
\end{thm}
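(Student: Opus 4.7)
The plan has three parts matching the three claims of Theorem \ref{main-thm} --- well-definedness of $\Lambda_R$, its adjustment structure, and subtlety. My first step combines the first two by invoking Lemma \ref{1-skel-realiz} with $G = G(\Lambda_R)$, $\varphi = \p$, and $\D$ as in Definition \ref{reduction}. If the lemma applies, it supplies a factorization relation $\sim_\p$ on $G(\Lambda_R)^*$ making $\Lambda_R := G(\Lambda_R)^*/\sim_\p$ a well-defined $k$-graph, and simultaneously yields a natural $\Lambda$-realization of $\Lambda_R$, which makes $(R)$ an adjustment.

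Applying Lemma \ref{1-skel-realiz} demands two checks. First, that $\D \p = d_R$ on $\Lambda_R^E$; this is a routine case split on whether $s(\iota(\mu)) \in (U_w^\E)^0$, since in the bridge-absorbing branch $\p(\mu) = \iota(\mu) f_{b, s(\iota(\mu))}$ and the definition of $\D$ subtracts exactly the appended $b$-degree. Second, and more substantially, that $(\im(\p), \sim_\Lambda, \D)$ satisfies (KG2) and (KG3). For (KG2), given composable $\p(\mu_1), \p(\mu_2)$ of different $\D$-degrees, I would concatenate them as a path in $\Lambda$, apply $\Lambda$'s factorization property to obtain a commuting-square partner, and then verify that the partner edges correspond, via $\p$, to elements of $\Lambda_R^E$. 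A case analysis on whether $s_R(\mu_i) = r_R(\mu_{i+1})$ lies in $(U_w^\E)^0$ and on whether the relevant degrees lie in $B$ or $\E$ organizes the verification; (KG3) reduces to associativity in $\Lambda$ combined with the uniqueness clause of (KG2). The stationary-set hypothesis on $x\Lambda^B$ governs exactly those cases where a bridge edge must be swapped across a co-bridge, ensuring the swapped partner is still a legitimate bridge appended by $\p$.

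For subtlety, I would apply Proposition \ref{varphi-prop} with $\varphi = \p$. This requires showing that $\{s_{\p(\mu)} : \mu \in \Lambda_R\}$ is a CK $\Lambda_R$-family in $C^*(\Lambda)$. Conditions (CK0) and (CK1) follow from $\p$ being a functor respecting the source/range structure fixed by $s_R, r_R$, with (CK1) invoking the factorization rule just established. Condition (CK2) reduces to the identity $s_{\p(\mu)}^* s_{\p(\mu)} = s_{s(\p(\mu))}$ in $C^*(\Lambda)$ --- which holds because $\p(\mu)$ has length at most two --- together with $s(\p(\mu)) = s_R(\mu)$ by design. Condition (CK3) follows by matching the sum $\sum_{\mu \in x\Lambda_R^{e_i}} s_{\p(\mu)} s_{\p(\mu)}^*$ against the standard $\Lambda$-sum at $x$, using that the deleted edges $\{r_B^{-1}(x) : x \in (U_w^\E)^0\}$ are precisely the bridges absorbed into $\p$. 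Proposition \ref{varphi-prop} then delivers an injection $\hat\p : C^*(\Lambda_R) \hookrightarrow C^*(\Lambda)$. I would identify $\hat\p(C^*(\Lambda_R))$ with the corner $P_{\Lambda_R^0} C^*(\Lambda) P_{\Lambda_R^0}$ and apply Theorem \ref{morita}. The saturation $\Sigma(\Lambda_R^0) = \Lambda^0$ holds because for each $x \in (U_w^\E)^0$ the unique bridge $f_{b,x} \in x\Lambda^b$ has source outside $U_w^\E$ (by bridge/co-bridge disjointness together with Proposition \ref{prop-in-out}), so $s(x\Lambda^b) \subseteq \Lambda_R^0 \subseteq \Sigma(\Lambda_R^0)$ and the saturation clause pulls $x$ in.

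The main obstacle I expect is verifying (KG2) for $\im(\p)$ --- specifically, ensuring that pulling back a commuting square from $\Lambda$ along $\p$ yields partner edges genuinely in $\im(\p)$ rather than paths that straddle $(U_w^\E)^0$. The stationary-set hypothesis on $x\Lambda^B$ exists precisely to prevent bridge edges from being reshuffled during such pullbacks, so the geometric content of $(R)$'s subtlety is really encoded in how that hypothesis interacts with this case analysis. Everything else --- the CK-relation checks, the identification with the corner, and the saturation argument --- is bookkeeping once (KG2-3) is settled.
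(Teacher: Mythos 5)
Your proposal follows essentially the same route as the paper's proof: Lemma \ref{1-skel-realiz} applied to $(\im(\p),\sim,\D)$ to get well-definedness and the adjustment structure (the paper's Lemmas \ref{R-lem} and \ref{kg2-3-lem} carry out exactly the two checks you identify, with the stationary hypothesis playing precisely the role you describe in (KG2--3)), followed by Proposition \ref{varphi-prop}, identification of the image with a corner, and Theorem \ref{morita} for subtlety. The one step you undersell is the equality $\im(\widehat{\p}) = P_X C^*(\Lambda) P_X$, which in the paper is not mere bookkeeping: the containment $P_X C^*(\Lambda) P_X \subseteq \im(\widehat{\p})$ requires factoring an arbitrary $B$-colored path with endpoints outside $U_w^\E$ into bridge/co-bridge pairs via the stationary hypothesis, and even an appeal to the subtlety of sink deletion to handle pairs in which neither edge has the distinguished color $b$.
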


We will prove this in two parts. First, we will demonstrate well-definedness of \textbf{(R)} this involves showing that $\D:\Lambda \to \Z^k$ is well-defined and showing that $\im(\p)$ satisfies $(KG2-3)$. From here, Proposition \ref{1-skel-realiz} will give a $\Lambda$-realization of $\Lambda_R$ guaranteeing that \textbf{(R)} is an adjustment. Second we will demonstrate that \textbf{(R)} is subtle with the aid of Proposition \ref{varphi-prop}.

Well-definedness of $\D$ and the satisfaction of $(KG2-3)$ is dependent on the new addition to $H_R$, the requirement that bridge edges be stationary. This definition is an extension of the \textit{complete edge} \cite[Notaion 6.1]{efgggp}.

\begin{dfn}
	For a higher-rank graph $\Lambda$, a set of edges $\mathcal{F}\subseteq \Lambda^E$ is called \textit{stationary} in color set $B\subseteq E$ if for all $f_1,f_2\in \F$
 \begin{itemize}
     \item $s(f_1)=s(f_2)$ and $r(f_1)=r(f_2)$
     \item If $\lambda\in \Lambda^B r(f_1)$, then $\lambda f_1 \sim_\Lambda \nu \mu$ implies $\mu\in \F$.
     \item If $\lambda \in s(f_1)\Lambda^B$, then $f_1\lambda \sim_\Lambda \nu \mu$ implies $\nu \in \F$. 
 \end{itemize}
	\label{stat}
\end{dfn}

\begin{lem}
\label{R-lem}
    For a $k$-graph/vertex pair, $(\Lambda,w)$, satisfying $H_R$, the proposed $\D: \Lambda \to \Z^k$ is well-defined.
\end{lem}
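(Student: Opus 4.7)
The plan is to reduce well-definedness of $\D$ on $\Lambda$ to invariance under elementary edge commutations. Since $\D$ is defined on $G(\Lambda)^1$ and extended multiplicatively to $G(\Lambda)^*$, well-definedness on $\Lambda$ amounts to the implication $\lambda_1 \sim_\Lambda \lambda_2 \implies \D(\lambda_1) = \D(\lambda_2)$. By Proposition \ref{permprop} the equivalence $\sim_\Lambda$ on $G(\Lambda)^*$ is generated by permutations of the color order, which in turn are generated by adjacent transpositions, i.e.\ by the elementary commutations $ef \sim_\Lambda e'f'$ with $d(e)\neq d(f)$ supplied by axiom (KG2). It therefore suffices to verify $\D(e) + \D(f) = \D(e') + \D(f')$ for each such swap. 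Because $d(e)+d(f) = d(e')+d(f')$, the only real question is whether the $-b$ adjustment (applied precisely when $d(\mu) \in B$ and $s(\mu)\in U_w^\E$) is triggered on the same number of edges on each side.

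I would proceed by a case analysis on the colors $(d(e), d(f))$. The case $d(e), d(f) \in \E$ is immediate since no adjustment ever occurs. In the mixed case, say $d(e) \in B$ and $d(f) \in \E$, I would use that $U_w^\E$ is a connected component of $\Lambda^\E$ to conclude that $s(f) \in U_w^\E$ if and only if $r(f) \in U_w^\E$; a direct comparison then shows that if $s(f) \in U_w^\E$ both $\D(e)$ and $\D(f')$ pick up $-b$ while the other two edges are unadjusted, and otherwise no adjustment occurs on either side.

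The substantive case is $d(e), d(f) \in B$, which I would split according to whether $s(f) \in U_w^\E$ and, if not, whether $r(f) \in U_w^\E$. If $s(f) \in U_w^\E$ then $f$ is a co-bridge edge, so Proposition \ref{prop-in-out} yields $s(e) = r(f) \notin U_w^\E$; applying the same argument on the swapped side using $s(f') = s(f)$ shows exactly one edge is adjusted on each side. If $s(f) \notin U_w^\E$ but $r(f) \in U_w^\E$, then $f$ is a bridge edge and $e$ is a co-bridge edge; applying the second bullet of the stationary condition (Definition \ref{stat}) to $\F = r(f)\Lambda^B$ with $e \in \Lambda^B r(f)$ forces $f' \in r(f)\Lambda^B$, so $f'$ is a bridge edge and $e'$ a co-bridge edge, again balancing the adjustments. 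Finally, when $s(f), r(f) \notin U_w^\E$, I would argue by contradiction: if $r(f') \in U_w^\E$, applying the stationary condition to $\F = r(f')\Lambda^B$ with $e' \in \Lambda^B r(f')$ and $e'f' \sim_\Lambda ef$ would force $f \in r(f')\Lambda^B$, contradicting $r(f) \notin U_w^\E$.

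The principal obstacle is this $B$-$B$ case, where the stationary hypothesis does the essential work of ruling out commutations that would shift a single $-b$ adjustment between incompatible positions. This is precisely the reason the stationary condition was added to $H_R$: without it, the factorization rule applied to a pair of $B$-colored edges could create or destroy a bridge-type edge after the swap, and so could create or destroy an adjustment, breaking well-definedness of $\D$.
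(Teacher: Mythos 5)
Your proposal is correct and follows essentially the same route as the paper: reduce well-definedness to invariance of the number of $-b$ adjustments under a single application of (KG2), then run a case analysis on the colors of the transposed pair using the connectedness of $U_w^\E$, Proposition \ref{prop-in-out}, and the stationary condition on the bridge sets $x\Lambda^B$. If anything, your write-up is slightly more complete than the paper's, since you explicitly verify the mixed $B$--$\E$ case and the $B$--$B$ case with no co-bridge edge, both of which the paper leaves implicit.
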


\begin{proof}
    For an edge $\lambda\in \Lambda^E$, $\D(\lambda)=d(\lambda)$ whenever $\lambda\notin \displaystyle{\bigcup_{x\in U_w^\E}} \Lambda^B x$. For this reason, a path in $\gamma\in G(\Lambda)^*$ will have $\D(\gamma)=d(\gamma)-\ell(\gamma) b$ where $\ell$ counts the number of co-bridge edges. For this reason, we need only prove that for paths $\gamma,\eta\in G(\Lambda)^*$ $\gamma\sim\eta$ implies $\ell(\gamma)=\ell(\eta)$.

    This can be reasoned as follows. Recall that $\sim$ is generated by transpositions of two color paths. By definition, the number of co-bridge edges will be preserved under transposition with elements of $\Lambda^\E$. Proposition \ref{prop-in-out} forbids the composition of co-bridge edges. For this reason, consider $\mu\theta$ with $\theta$ a co-bridge edge, $d(\mu)\in B$, and $s(\mu)\notin U_w^\E$. For any equivalent path $\lambda_2\lambda_1$, the sources must align so $\lambda_1$ is a co-bridge edge and $\lambda_2$ cannot be a co-bridge edge. The remaining case to check is a bridge/co-bridge pair $\theta f$. For any equivalent path $\lambda_2\lambda_1$, the stationary condition on bridge edges guarantees $\lambda_1$ a bridge edge and in particular $r(\lambda_1)\in U_w^\E$. We conclude that $\lambda_2$ is a co-bridge edge.
\end{proof}

\begin{rem}
    \label{image}
    Note that $\im(\p)$ may be partitioned into two sets, $\Xi$ and $\Theta$.
    \begin{align*}
        \Xi &= \{ \xi\in\Lambda^E : s(\xi), r(\xi)\notin U_w^\E \} \\
        \Theta &= \{ \theta f_{b,s(\theta)}: s(\theta)\in U_w^\E \text{ and } d(\theta)\in B \}
    \end{align*}
\end{rem}

\begin{cor}
    For a path $\xi\in \Xi^*$, relation $\xi\sim \zeta$ implies $\zeta\in \Xi^*$. As a consequence $\Xi^*/\sim$ is a higher rank graph. 
\end{cor}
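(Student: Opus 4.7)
The plan is to use Proposition \ref{permprop} to reduce the statement to a single elementary transposition and then analyze the effect of such a swap on a composable pair drawn from $\Xi$. Concretely, suppose $\xi = \xi_n \cdots \xi_1 \in \Xi^*$ and $\zeta$ is obtained from $\xi$ by replacing a composable pair $\xi_{i+1}\xi_i$ with the unique $\xi_i'\xi_{i+1}'$ provided by $(KG2)$. Since the outer endpoints are unchanged, $r(\xi_i') = r(\xi_{i+1}) \notin U_w^\E$ and $s(\xi_{i+1}') = s(\xi_i) \notin U_w^\E$, so the only thing to verify is that the new middle vertex $v := s(\xi_i') = r(\xi_{i+1}')$ does not lie in $(U_w^\E)^0$.

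If $d(\xi_i') \in \E$, then $\xi_i' \in \Lambda^\E$ has its range outside $U_w^\E$, so because $U_w^\E$ is a connected component of $\Lambda^\E$ its source $v$ lies outside $U_w^\E$ as well; the analogous reasoning applied to $\xi_{i+1}'$ disposes of the case $d(\xi_{i+1}') \in \E$. The remaining and genuinely interesting case is $d(\xi_i), d(\xi_{i+1}) \in B$, and here the stationary hypothesis from Definition \ref{reduction} does the real work. Arguing by contradiction, assume $v \in (U_w^\E)^0$. Then $\xi_{i+1}'$ has range $v$ and color in $B$, so by the uniqueness clause in Definition \ref{red} it must coincide with the bridge edge $f_{d(\xi_i),v}$, while $\xi_i' \in \Lambda^B v$ is a co-bridge edge. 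Applying the stationary property of $v\Lambda^B$ to the equivalence $\xi_i' f_{d(\xi_i),v} \sim \xi_{i+1}\xi_i$ forces $\xi_i \in v\Lambda^B$, i.e.\ $\xi_i$ is a bridge edge with range $v \in U_w^\E$, contradicting $\xi_i \in \Xi$. Iterating over a sequence of elementary transpositions then establishes the main claim.

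For the consequence, I would invoke Proposition \ref{kg-rel-prop} applied to the $k$-colored digraph with edge set $\Xi$ and the restriction of $\sim$ to $\Xi^*$, which the main claim shows is well-defined. Conditions $(KG0)$, $(KG1)$, and $(KG3)$ are inherited directly from $\Lambda$; condition $(KG2)$ is exactly what the main claim establishes, since the unique pair $\lambda',\mu'$ provided by $(KG2)$ in $\Lambda$ remains in $\Xi$ by the preceding argument. The hard part will be the mixed $B$-color case of the transposition analysis; the remaining cases are essentially connectedness bookkeeping, and descending the $(KG)$ axioms is straightforward once the closure of $\Xi^*$ under $\sim$ is in hand.
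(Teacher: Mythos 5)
Your proof is correct, but it takes a genuinely different route from the paper's. The paper argues globally: if $\zeta\notin\Xi^*$ then $\zeta$ must visit $(U_w^\E)^0$ while starting and ending outside it, so by Proposition \ref{prop-in-out} it must contain a co-bridge edge, whence $\ell(\zeta)>0$; since Lemma \ref{R-lem} already established that the co-bridge count $\ell$ is a $\sim$-invariant and $\ell(\xi)=0$, this is a contradiction. You instead re-derive the needed invariance locally, tracking a single elementary transposition and showing the new intermediate vertex cannot land in $(U_w^\E)^0$: the $\E$-colored cases follow from $U_w^\E$ being a connected component of $\Lambda^\E$, and the two-$B$-color case is handled by identifying $\xi_{i+1}'$ with the unique bridge edge $f_{d(\xi_i),v}$ and invoking the stationary hypothesis to force $\xi_i\in v\Lambda^B$, contradicting $\xi_i\in\Xi$. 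This is essentially the same style of case analysis the paper uses inside the proof of Lemma \ref{R-lem}, so your argument is self-contained at the cost of duplicating that work, while the paper's is shorter because it cites the invariant already in hand. Both treatments of the consequence (closure of $\Xi^*$ under $\sim$ lets the $(KG)$ axioms descend from $G(\Lambda)^*$, with $(KG2)$ existence being exactly the closure statement) agree.
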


\begin{proof}
    Suppose for sake of contradiction that $\zeta\notin \Xi^*$. Since $\xi\in \Xi^*$, we know that $s(\xi),r(\xi)\notin U_w^\E$. In particular, this means that $s(\zeta),r(\zeta)\notin U_w^\E$. Proposition \ref{prop-in-out} together with $\zeta\notin \Xi^*$ guarantee that $\ell(\zeta)>0$. This contradicts $\zeta \sim \xi$.

    Since $\Xi^*$ is closed under the equivalence relation, $\sim$, we may conclude that any contradiction of $(KG0-3)$ would contradict the fact that $(G(\Lambda)^*,\sim)$ satisfy $(KG0-3)$.
\end{proof}

\begin{lem}
\label{kg2-3-lem}
    For a $k$-graph/vertex pair, $(\Lambda,w)$, satisfying $H_R$, the triple $(\im(\p),\sim, \D)$ satisfies $(KG2-3)$.
\end{lem}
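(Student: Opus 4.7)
The plan is to verify (KG2) and (KG3) for $(\im(\p), \sim, \D)$ by case analysis on whether the input pair $\lambda, \mu \in \im(\p) = \Xi \cup \Theta$ belongs to $\Xi$ or $\Theta$; the $\Xi$-only case reduces immediately to the preceding corollary, while the remaining cases are handled by sequences of (KG2) swaps in $\Lambda$ together with the stationary hypothesis on each $x\Lambda^B$.

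For (KG2), suppose $\lambda, \mu \in \im(\p)$ are composable with $\D(\lambda) \neq \D(\mu)$. If both are in $\Xi$, the preceding corollary shows $\lambda\mu \in \Xi^*$, so the (KG2) swap in $\Lambda$ produces a unique pair in $\Xi^* \subseteq \im(\p)^*$ with the required degrees. Otherwise, unroll each $\Theta$-element $\theta f_{b,s(\theta)}$ as a $2$-edge $\Lambda$-path so that $\lambda\mu$ becomes a $\Lambda$-path of length $3$ (when exactly one of $\lambda,\mu$ lies in $\Theta$) or $4$ (when both do). I would then perform a sequence of one to three (KG2) swaps in $\Lambda$ to permute the color sequence into the one matching the intended target pair $(\D(\mu), \D(\lambda))$, and chunk the resulting $\Lambda$-path back into $\im(\p)$-elements.

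The essential ingredient is the stationary hypothesis: whenever a swap step exchanges a $B$-edge with a bridge in some $x\Lambda^B$ (for $x \in (U_w^\E)^0$), the stationary condition forces the new edge occupying the bridge slot to lie again in $x\Lambda^B$, i.e., to be a specific $f_{b',x}$. After the sequence of swaps, the remaining bridges of color $b$ are adjacent to the appropriate co-bridges and pair off into $\Theta$-elements; any $\E$-edge that has moved ends up with both endpoints outside $U_w^\E$ (using Proposition \ref{prop-in-out} and the fact that $U_w^\E$ is an $\E$-connected component, so an $\E$-edge with one endpoint outside must have both endpoints outside), and hence lies in $\Xi$. The $\D$-degrees then add correctly by design.

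Uniqueness follows from two observations. First, Proposition \ref{permprop} gives a unique $\Lambda$-path equivalent to $\lambda\mu$ for each permuted color sequence. Second, any candidate $\im(\p)^2$-chunking is constrained by the structural requirement that a $\Theta$-chunk have its interior vertex in $U_w^\E$ while an adjacent $\Xi$-chunk have its endpoints outside, so at most one chunking is consistent with the unique equivalent $\Lambda$-path. Finally, (KG3) is immediate from (KG3) for $\Lambda$, since $\sim$ on $\im(\p)^*$ is just the restriction of $\sim_\Lambda$ on $\Lambda^*$. The main obstacle will be the bookkeeping in (KG2), with several sub-cases depending on whether $d(\lambda)$ or $d(\mu)$ lies in $B$ versus $\E$, and on whether the relevant $B$-color equals the fixed $b$; but in each sub-case the stationary hypothesis consistently produces the required bridge edges at each step.
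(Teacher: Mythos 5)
Your treatment of (KG2) is essentially the paper's own argument: both proofs split $\im(\p)$ into $\Xi\sqcup\Theta$, dispose of the all-$\Xi$ case via the preceding corollary, unroll each $\Theta$-chunk $\theta f_{b,s(\theta)}$ into a length-two $\Lambda$-path, push the swap through $\Lambda$ using the stationary hypothesis to force the bridge slot to be filled by some $f_{b',x}$, use Proposition \ref{prop-in-out} together with the fact that $U_w^\E$ is an $\E$-connected component to see that the re-chunked pieces land back in $\Xi\sqcup\Theta$, and extract uniqueness from Proposition \ref{permprop} plus the structural constraints on chunking. If anything you are slightly more explicit than the paper, since you allow both factors to lie in $\Theta$, a case the paper's write-up leaves implicit.

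The one genuine weakness is the dismissal of (KG3) as ``immediate from (KG3) for $\Lambda$, since $\sim$ on $\im(\p)^*$ is just the restriction of $\sim_\Lambda$.'' That is not the right reason. The edges of the new structure are chunks (possibly length-two $\Lambda$-paths), the degree functor is $\D$ rather than $d$, and a single (KG2) swap at the chunk level is a composite of several $\sim_\Lambda$ swaps followed by a re-chunking; associativity of this composite operation is not formally inherited from associativity of single-edge swaps in $\Lambda$. The paper has to argue it: for a representative three-$\D$-colored path $\xi_2\theta f\xi_1$ it carries out the two orders of pairwise application, checks that both outcomes have the same color word, and only then invokes the $k$-graph property of $\Lambda$ (uniqueness of the representative with a given color word, Proposition \ref{permprop}) to conclude the outcomes coincide. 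Your argument is repairable in exactly this way --- the uniqueness you establish for (KG2), namely that at most one $\im(\p)^*$-chunking with a given $\D$-color sequence is $\sim_\Lambda$-equivalent to a given path, is precisely what closes the loop --- but as written the (KG3) step is asserted rather than proved, and the justification offered would not survive scrutiny.
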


\begin{proof}
    Recall that $\im(\p)=\Xi\sqcup \Theta$. Secondly, recall that $\Xi^*/\sim$ is a well-defined higher-rank graph and $\D|_\Xi=d|_\Xi$. This means we need only check that $(KG2-3)$ are satisfied for paths containing elements of $\Theta$. 

    We begin by showing $(KG2)$. For a path $\xi \lambda f_{b,s(\lambda)}$ (which is two-colored under $\D$), there exist many paths equivalent under $\sim$, but only one such path is contained in $\im(\p)$. In particular, if $d(\xi)\in B$ then the stationary condition makes $\xi \theta f\sim \lambda$ imply that $\lambda_1=f_{a,s(\theta)}$ for some $s\in B$. If $\lambda\in\im(\p)$, then the only bridge edge available is of the type $f_{b,s(\theta)}$. This means that $c(\lambda)=d(\xi)d(\theta)b$ or $d(\theta)d(\xi)b$. Identical reasoning will show that starting with $\theta f \xi$ ensures that $c(\lambda)=d(\theta) b d(\xi)$ or $d(\xi) b d(\theta)$.

    If instead we suppose $d(\xi)\notin B$, then we claim that $\xi \theta f\sim \lambda$ with $\lambda\in \im(\p)$ implies that $d(\lambda_2)\in B$. To show this, suppose that $d(\lambda_2)\notin B$. This would imply that $d(\lambda_1)\in B$ and in particular since $s(\lambda_1)=s(f_{b,s(\theta)})$ the full reducibility of $w$ implies that $\lambda_1$ is a bridge edge. Since $d(\lambda_2)\notin B$, we conclude that $\lambda_2\in U_w^\E$ which contradicts the hypothesis that $\lambda\in \im(\p)$. The stationary condition ensures that $\lambda_3$ is not a bridge edge. Therefore $c(\lambda)= d(\xi)d(\theta)b$ or $d(\theta)bd(\xi)$.

    To demonstrate $(KG3)$, we need to consider a number of cases, yet each case uses nearly identical reasoning. For this reason we will show the case $\xi_2 \theta f \xi_1$ with $d(\xi_1)\in B$ and $d(\xi_2)\notin B$ which we believe will give the reader appropriate context to check the remaining cases.

    Consider the two distinct ordered applications of $(KG2)$. That is $\xi_2 \theta f \xi_1\sim \lambda \sim \omega \sim \zeta$ and $\xi_2 \theta f \xi_1\sim \alpha \sim \beta \sim \gamma$ 
    \begin{align*}
         c(\lambda)&=d(\xi_2)d(\xi_1)bd(\theta) & c(\alpha) &= d(\theta) b d(\xi_2) d(\xi_1) \\
        c(\omega) & = d(\xi_1)d(\xi_2)bd(\theta)& c(\beta) &= d(\theta) b d(\xi_1) d(\xi_2)  \\
        c(\zeta) & = d(\xi_1)bd(\theta)d(\xi_2) & c(\gamma) &= d(\xi_1) b d(\theta) d(\xi_2).
    \end{align*}
    Since $c(\zeta)=c(\gamma)$, $\gamma\sim \zeta$, and $\Lambda$ is a higher-rank graph we conclude that $\zeta=\gamma$ and therefore $(\im(\p),R,\sim)$ satisfies $(KG2-3)$. Lemma \ref{1-skel-realiz} ensures that $\Lambda_R$ is a well-defined $k$-graph with $\Lambda$-realization $(\p,R)$ as defined.
\end{proof}

We may now present the proof of the Theorem \ref{main-thm}

\begin{proof}[Proof of Theorem \ref{main-thm}]
    Lemmas \ref{R-lem} and $\ref{kg2-3-lem}$ combine with Proposition \ref{1-skel-realiz} to show that \textbf{(R)} is a well-defined higher-rank graph adjustment.

    It remains to show that \textbf{(R)} is subtle. For this, we suppose that $(\Lambda,w)$ is a pair satisfying $H_R$ and $\Lambda$ is a row-finite, higher-rank graph. We will then show that $C^*(\Lambda)\sim_{ME} C^*(\Lambda_R)$.

    Let $\{s_\lambda : \lambda \in \Lambda^0 \cup \Lambda^E\}$ be the canonical Cuntz-Krieger $\Lambda$-family generating $C^*(\Lambda)$. Define
	\[ T_\lambda = s_{\p(\lambda)} \text{, for } \lambda\in \Lambda_R^0 \cup \Lambda_R^E. \]
	We now proceed to show that this is a Cuntz-Krieger $\Lambda_R$-family. $(CK0)$ follows directly from $\p(x)=x$ for vertices $x$. $(CK1)$ follows from the fact that $\mu\sim_R\lambda$ if and only if $\p(\mu)\sim\p(\lambda)$. 
	
	Fix some $\mu\in \Lambda_R^E$, and it can be easily checked that $s(\p(\mu))=s_R(\mu)$. So,
	\[ T_\mu^* T_\mu = s_{\p(\mu)}^*s_{\p(\mu)} = s_{s(\p(\mu))}=s_{s_R(\mu)}=T_{s_R(\mu)}.\]
    This shows that $\{T_\lambda\}$ satisfies $(CK2)$. 
	
	All that remains is $(CK3)$. To demonstrate this, we will consider some $x\in \Lambda_R^0$ and $e_j\in \N^k$. We claim that for any $\mu\in \Lambda_R^{e_j}$ we have $T_\mu T_{\mu}^*=s_{\iota(\mu)} s_{\iota(\mu)}^*$. This amounts to recognizing first that $s_{\iota(\mu)f_{b,s(\mu)}}s^*_{\iota(\mu)f_{b,s(\mu)}} = s_{\iota(\mu)}s_{f_{b,s(\iota(\mu))}}s_{f_{b,s(\iota(\mu))}}^*s_{\iota(\mu)}$. Observe that since $s(\iota(\mu))$ is reducible with color set $B$ and $\{s_{\lambda}: \lambda \in \Lambda^0 \cup \Lambda^E\}$ satisfies $(CK3)$, we have that $s_{f_{b,s(\iota(\mu))}}s_{f_{b,s(\iota(\mu))}}^*=s_{s(\iota(\mu))}$.
	
	Consider the sum
	\[ \sum_{\mu\in x\Lambda_R^{e_j}} T_\mu T_\mu^* = \sum_{\mu\in x\Lambda_R^{e_j}}s_{\iota(\mu)}s_{\iota(\mu)}^*. \]
	
	Note that $x\notin U_w^\E$ so $\iota(x\Lambda_R^{e_j})=x\Lambda^{e_j}$. We re-index this sum to obtain
	
	\[ \sum_{\mu \in x\Lambda^{e_j}} s_{\mu}s_{\mu}^* = s_{x}=T_{x},\]
	
	and conclude that $\{T_\lambda\}$ satisfies $(CK3)$. In particular, Proposition \ref{varphi-prop} implies the existence of an injective $*$-homomorphism $\widehat{\p}: C^*(\Lambda_R)\into C^*(\Lambda)$. To demonstrate Morita equivalence, we must find a set $X\subseteq \Lambda^0$ such that $P_X C^*(\Lambda)P_X=\im(\widehat{\p})$ and demonstrate that $\Sigma(X)=\Lambda^0$. 
	
	Define the set $X=\iota(\Lambda_R^0)=\Lambda^0\setminus (U_w^\E)^0$. We claim that that $P_XC^*(\Lambda)P_X = \im(\widehat{\p})$. Note the set equality:
	\[ P_X C^*(\Lambda)P_X = \overline{\Span}\{s_\mu s_\lambda^*: r(\mu),r(\lambda) \in X \text{ and } s(\mu)=s(\lambda)\}. \]
	
	 Take $\mu,\lambda\in \Lambda$ such that $s(\mu)=s(\lambda)$ and $r(\mu),r(\lambda)\notin U_w^\E$. Since $r(\mu),r(\lambda) \notin U_w^\E$ there exists a representation of $\mu$ called $\eta \gamma$ such that $\eta\in \Xi^*$ and $|d(\eta)|$ is maximal. Likewise we take a maximal representation of $\lambda$ called $\xi \zeta$. I claim that $d(\gamma)\in\Span(B)$. If this was not the case then $c(\gamma)$ would contain a color not in $B$. By Proposition \ref{permprop}, there exists a representation of $\gamma$ with final edge color not in $B$ which by $(KG0)$ contradicts the maximality of $\eta$.

    We consider $s_\mu s_\lambda^* = s_\eta (s_\gamma s_\zeta^*) s_\xi^*$. By construction, $s_\eta,s_\xi^*\in \im(\widehat{\p})$ so we need only be concerned with $s_\gamma s_\zeta^*$. Suppose that $x:=s(\gamma)\in U_w^\E$. By the reduciblity of $s(\gamma)$ and $(CK3)$, we have $s_x=s_{f(b,x)}s_{f(b,x)}^*$. Thus we have equality $s_\gamma s_{f(b,x)}s_{f(b,x)}^* s_\zeta^*$. We may therefor assume without loss of generality that $s(\gamma)=s(\zeta)\notin U_w^\E$. 

    I claim for all $\gamma\in \Lambda$ with $d(\gamma)\in \Span(B)$ and $s(\gamma),r(\gamma)\notin U_w^\E$ $s_\gamma\in \im(\widehat{\p})$. To prove this, we rely on the stationary condition. Since bridge edges and co-bridge edges are stationary, we may factor $s_\gamma = \prod s_{\chi_i} s_{\theta_i f_i} $ with $\chi_i\in \Xi^*$ and $\theta_i f_i$ a bridge/co-bridge pair. This makes the claim equivalent to showing that $s_{\theta f}\in \im(\widehat{\p})$ for any bridge/co-bridge pair. 

    Let $\theta f$ be a bridge/co-bridge pair. If either $d(\theta)=b$ or $d(f)=b$ then $\theta f$ is equivalent to an element of $\Theta$ and $s_{\theta f}\in \im(\widehat{\p})$. Instead suppose that $d(f),d(\theta)\neq b$. Appealing to the subtlety of sink deletion and the symmetry of $\sim_{ME}$, we may assume that $\Lambda^b s(\theta)$ is nonempty (if it were not $s(\theta)$ would be a sink). Let $\vartheta\in \Lambda^b s(\theta)$. This coupled with $(CK2-3)$ yields $s_{\theta f} = (s_{\theta}s_{f(b,s(\theta))})(s_{f(b,s(\theta))}^* s_{\vartheta}^*) (s_{\vartheta} s_f)$ each of which are a bridge/co-bridge pair with an edge of degree $b$.

    This proves the claim which finishes the proof of equality $\im(\widehat{\p})=P_X C^*(\Lambda) P_X$.
	
	Finally we demonstrate that $\Sigma(X)=\Lambda^0$. This follows from the fact that $\Sigma(X)$ is hereditary. Let $y \in (U_w^\E)^0$. Since $y$ is a reducible vertex we know that $r(s_B^{-1}(y))=x\notin U_w^\E$, which allows us to conclude that $y\in \Sigma(X)$. By Theorem \ref{morita}, we conclude that $\im(\widehat{\p})\sim_{ME} C^*(\Lambda)$.
	
	Since $\widehat{\p}: C^*(\Lambda_R) \into C^*(\Lambda)$ was an isomorphism onto its image we may conclude that $C^*(\Lambda)\sim_{ME}C^*(\Lambda_R)$ and \textbf{(R)} is subtle.\qedhere

\end{proof}

As mentioned in Section \ref{moves}, it is beneficial for subtle graph moves to be robust (Definition \ref{robdef}). We conclude this section by showing that \textbf{(R)} is a robust higher-rank graph adjustment.

\begin{thm}
\label{robust}
    The move \textbf{(R)} is robust.
\end{thm}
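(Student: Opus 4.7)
The plan is to verify that $(\Lambda\times\Omega,(w,u))$ satisfies $H_R$ with the same color set $B$ (now viewed inside $E_k\cup E_j$), and then exhibit a natural isomorphism $(\Lambda\times\Omega)_R\cong\Lambda_R\times\Omega$ commuting with parent functors. Fix $u\in\Omega^0$ (taking $\Omega$ connected for concreteness; otherwise one reduces at one representative per connected component), and set $\E':=(E_k\cup E_j)\setminus B=\E\cup E_j$.

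For the hypothesis verification, I would first identify $U_{(w,u)}^{\E'}=U_w^\E\times\Omega^0$, which follows from Remark \ref{prod-rem} and the fact that every $E_j$-edge of $\Omega$ has color in $\E'$. For each $(x,u')\in(U_w^\E)^0\times\Omega^0$, the four clauses of Definition \ref{red} translate directly: any $B$-edge out of $(x,u')$ has the form $(\lambda,u')$ with $\lambda\in x\Lambda^B$, so uniqueness of the bridge of each color, absence of $B$-loops at $(x,u')$, and the common source condition all descend from the reducibility of $x$ in $\Lambda$. Disjointness of bridges and co-bridges in the product reduces to the same statement in $\Lambda$ by projecting to the first factor, and stationarity of the bridge set at $(x,u')$ follows from stationarity of $x\Lambda^B$ in $\Lambda$ after using Remark \ref{prod-rem} to rewrite each factorization in the product as a pair of factorizations.

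For the isomorphism, the explicit formulas in Definition \ref{reduction} give $(\Lambda\times\Omega)_R^0=\Lambda_R^0\times\Omega^0$ and an edge set $\Lambda_R^E\times\Omega^0\sqcup\Lambda_R^0\times\Omega^{E_j}$, which is exactly the $1$-skeleton of $\Lambda_R\times\Omega$; sources and ranges coincide because the source-adjustment in the product only modifies edges of the form $(\lambda,u')$ with $s(\lambda)\in U_w^\E$, and on such edges it reduces to the $\Lambda$-factor adjustment. Choosing the same distinguished color $b\in B$ on both sides, $\p_{\Lambda\times\Omega}$ acts componentwise and so agrees with $\p_\Lambda\times\id_\Omega$ under this identification. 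Since Lemma \ref{1-skel-realiz} determines the factorization rule on $(\Lambda\times\Omega)_R$ from this parent, and $\p_\Lambda\times\id_\Omega$ generates exactly the coordinatewise product equivalences (Remark \ref{prod-rem}), the induced rule matches the product factorization on $\Lambda_R\times\Omega$. I expect the subtlest step to be this last one: any equivalence on $(\Lambda\times\Omega)_R$ not of product form would push forward under $\p_{\Lambda\times\Omega}=\p_\Lambda\times\id_\Omega$ to a non-product equivalence in $\Lambda\times\Omega$, contradicting Remark \ref{prod-rem}.
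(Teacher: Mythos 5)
Your proposal is correct and follows essentially the same route as the paper: verify $H_R$ for $(\Lambda\times\Omega,(w,u))$ by identifying the neighborhood as $U_w^\E\,\square\, G(\Omega)$ and descending reducibility, disjointness, and stationarity to the $\Lambda$-factor via Remark \ref{prod-rem}, then obtain the isomorphism from the fact that the product's parent functor acts as $\p\times\id$ (the paper phrases this as $\im(\p_\times)=\im(\p)\times\Omega$ and composes $\p_\times^{-1}\circ(\p\times\id)$ using injectivity of the realizations). Your explicit matching of the $1$-skeletons and factorization rules is the same argument unwound, and your caveat about connected components of $\Omega$ is a reasonable minor refinement the paper glosses over.
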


\begin{proof}
    Suppose that $\Lambda$ and $\Omega$ are source free, row-finite graphs of rank $k$ and $j$ respectively. Further suppose that $(\Lambda,w)$ is a pair satisfying $H_R$. Our goal is two-fold. First, we show that for any $y\in \Omega^0$ the pair $(\Lambda\times\Omega, (w,y))$ satisfies $H_R$. Second we demonstrate the existence of the connecting isomorphism.

    We will use the canonical isomorphism $\N^k\times\N^j\cong \N^{k+j}$ and slight abuse of notation to partition the standard basis $E_{k+j}=E_k\sqcup E_j$. Since $(\Lambda,w)$ satisfies $H_R$, there exists a color set $B\subseteq E_k$ such that $w$ is fully reducible in color set $B$. Since $E_k\sqcup E_j \setminus B = \E \sqcup E_j$ and $G(\Lambda\times\Omega)=G(\Lambda)\square G(\Omega)$, it can be quickly checked that for any $y\in \Omega^0$ the neighborhood of color set $\E\sqcup E_j$ about $(w,y)$ is $U_w^\E \square G(\Omega)$. 

    Let $(x,y)$ be a vertex in $U_w^\E \square G(\Omega)$. Since $B\subseteq E_k$, we have $(\Lambda\times\Omega)^B (x,y) = \Lambda^Bx \times \{y\}$ and $(x,y)(\Lambda\times\Omega)^B  = \Lambda^Bx \times \{y\}$. Thus $(x,y)$ satisfies Definition \ref{red}. Since $(x,y)$ was arbitrary, we conclude that $(w,y)$ is fully reducible with color set $B$. According to Remark \ref{prod-rem}, the equivalence relation on $G(\Lambda)\square G(\Omega)$ takes the natural form $\sim_\Lambda \times \sim_\Omega$. This means that since bridge edges are stationary with color set $B$ under $\sim_\Lambda$ the set $\Lambda^B x \times \{y\}$ is stationary with color set $B$. We conclude that $(\Lambda\times \Omega, (w,y))$ satisfies $H_R$. 

    With this knowledge we may construct $\Lambda_R \times \Omega$ and $(\Lambda\times \Omega)_R$. Since \textbf{(R)} is an adjustment each will come equipped with an $\Lambda\times\Omega$-realization. Further, by the construction of \textbf{(R)} these realizations are injective.
    \begin{center}
        \begin{tikzcd}
            \Lambda \times \Omega & \Lambda_R \times \Omega \ar[l,"\p\times \id"] \\
            (\Lambda \times \Omega)_R \ar[u,"\p_\times"]
        \end{tikzcd}
    \end{center}
    The connecting isomorphism can be obtained almost immediately. Careful examination of the definition of $\p_\times$ will show that $\im(\p_\times)=\im(\p)\times \Omega$. This stems from the fact that $\p$ only acts differently from $\id$ on edges with color in the set $B$. These edges can only be found in the $\Lambda$ factor.

    From here we leverage injectivity. Since $\p_\times$ is injective there is a two sided inverse defined on $\im(\p_\times) = \im(\p)\times \Omega$. That is the map $\p_\times^{-1} \circ (\p \times \id): \Lambda_R \times \Omega \to (\Lambda \times \Omega)_R$ is a bijection that makes the diagram above commute. We conclude that \textbf{(R)} is robust.
    
\end{proof}

\section{Complete Edge Reduction, Delay, and Robustness}

As put forth in the introduction, this section will be dedicated to showing that two of the moves from \cite{efgggp} are indeed special cases of this new form of reduction. Specifically, they lie on opposite sides of a spectrum determined by the size of the color set $B$.  

We begin with the move \textbf{(CR)} which requires the definition of a \textit{complete edge}. This was a significant source of inspiration when defining stationary sets.

\begin{dfn}
	\cite[Notation 6.1]{efgggp} We say a collection of edges, $G\subseteq \Lambda^E$, is a \textit{complete edge} if it has the following properties:
	\begin{enumerate}[(1)]
		\item $G$ contains precisely one edge of each color;
		\item $s(g_1)=s(g_2)$ and $r(g_1)=r(g_2)$ for every $g_1,g_2\in G$;
		\item if $g_1\in G$ and $\mu,\eta, g\in \Lambda^E$ satisfy $\mu g_1\sim g\eta$ or $\mu g_1\sim \eta g$, then $g\in G$.
	\end{enumerate}
\end{dfn}

One can check that a complete edge is indeed a stationary set with color set $E$.

\begin{dfn}
	\textbf{Complete Edge Reduction (CR)\cite[Definition 6.3]{efgggp}: } Let $\Lambda$ be a $k$-graph and fix $w\in \Lambda^0$ such that $\Lambda^E w$ and $w\Lambda^E$ are complete edges and $w\neq r(\Lambda^E w)=:x$. Define
	\begin{align*}
		\Lambda_{CR}^0 &= \Lambda^0 \setminus \{w\}\\
		\Lambda_{CR}^E & = \Lambda^E\setminus \Lambda^E w \\
		s_{CR}(e)&=s(e)\\
		r_{CR}(e)& =\begin{cases}
			r(e), & r(e)\neq w \\
			x, & r(e)=w
		\end{cases}
	\end{align*}
	Then fix $g\in \Lambda^E w$ and define $\p$ on $y\in \Lambda_{CR}^0$ and $\gamma\in \Lambda_{CR}^E$ in the following way:
	\begin{align*}
		\p(y) & = \iota(y) \\
		\p(\gamma) & = \begin{cases}
			\iota(\gamma), & r(\iota(\gamma))\neq w \\
			g \iota(\gamma), & r(\iota(\gamma))=w.
		\end{cases}
	\end{align*}
	Now extend multiplicatively and define $\mu \sim_{CR} \lambda$ if and only if $\p(\mu)\sim \p(\lambda)$.
\end{dfn}

This move shares enough in common with \textbf{(R)} that we can directly demonstrate $\Lambda_R \cong \Lambda_{CR}$. 

\begin{prop}
	If $(\Lambda,w)$ is a pair satisfying $H_{CR}$, then the vertex $w$ is fully reducible with color set $E$, and $\Lambda^Ew=\{f_{b}:b\in B\}$ is a stationary set. Since $(\Lambda,w)$ satisfies $H_R$ and $H_{CR}$ $\Lambda_R$ and $\Lambda_{CR}$ exist with $\Lambda_{CR}\cong\Lambda_R$. 
\end{prop}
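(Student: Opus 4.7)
The plan is to first verify that $H_{CR}$ implies $H_R$ with color set $B = E$, and then to exhibit a natural isomorphism $\Lambda_R \cong \Lambda_{CR}$. Since $B = E$, we have $\E = \emptyset$, so the neighborhood $U_w^\E$ is the singleton $\{w\}$. The reducibility conditions of Definition \ref{red} translate almost verbatim from the complete edge data: $w\Lambda^b$ is a singleton for each $b \in E$ because $w\Lambda^E$ has one edge per color, $s(w\Lambda^E) = \{v\}$ is the common source clause of a complete edge, and $w \notin r(\Lambda^E w) = \{x\}$ is the CR hypothesis $w \neq x$. The condition $\mu \in w\Lambda^E v \Rightarrow d(\mu) \in B$ is vacuous once $B = E$. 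Full reducibility additionally requires disjointness of bridges $w\Lambda^E$ and co-bridges $\Lambda^E w$, which reduces to the absence of a loop at $w$ and is again secured by $w \neq x$.

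Next I would verify that the bridges $w\Lambda^E$ form a stationary set, which is what the move \textbf{(R)} requires. The first clause of Definition \ref{stat} is the common source/range property of the complete edge. For the second clause, given $f_1 \in w\Lambda^E$ and $\lambda \in \Lambda^E w$, any rearrangement $\lambda f_1 \sim \nu \mu$ can be matched against axiom (3) of the complete edge definition in the form $\mu_{CE} g_1 \sim \eta g$ with $g_1 = f_1$, $\mu_{CE} = \lambda$, and $g = \mu$; this forces $\mu \in w\Lambda^E$. For the third clause, if $\lambda \in v\Lambda^E$ and $f_1 \lambda \sim \nu \mu$, the outer edge $\nu$ satisfies $r(\nu) = r(f_1) = w$ and $d(\nu) = d(\lambda) \in E$ automatically, so $\nu \in w\Lambda^E$ with no further appeal. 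A symmetric argument using the complete edge axiom on $\Lambda^E w$ shows that the co-bridges $\Lambda^E w = \{f_b : b \in B\}$ are likewise stationary, matching the explicit statement of the proposition.

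For the isomorphism, both $\Lambda_R$ and $\Lambda_{CR}$ have vertex set $\Lambda^0 \setminus \{w\}$. In $\Lambda_R$ the bridges $w\Lambda^E$ are deleted and each co-bridge $g \in \Lambda^E w$ has its source reassigned from $w$ to $v$; in $\Lambda_{CR}$ the co-bridges $\Lambda^E w$ are deleted and each bridge $f \in w\Lambda^E$ has its range reassigned from $w$ to $x$. I would define $\Phi : \Lambda_R \to \Lambda_{CR}$ to be the identity on vertices and on edges that avoid $w$ in $\Lambda$, and by $\Phi(g_b) = f_b$ on the color-$b$ co-bridge and bridge. In both graphs these appear as edges of color $b$ from $v$ to $x$, so $\Phi$ is a color-preserving bijection of $1$-skeletons. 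Because the factorization rules of $\Lambda_R$ and $\Lambda_{CR}$ are each pulled back from $\Lambda$ by their parent functors, which send the ``new'' color-$b$ edge to the same composed path $g_b f_b \in \Lambda$, the map $\Phi$ intertwines these equivalence relations and extends to a $k$-graph isomorphism.

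The main obstacle is the second clause of the stationary condition, which requires correctly matching the rearranged path $\nu \mu \sim \lambda f_1$ against the complete edge axiom's two formats $\mu_{CE} g_1 \sim g\eta$ and $\mu_{CE} g_1 \sim \eta g$ so that the ``inner'' edge of the rearrangement lands in $w\Lambda^E$. Once this matching is pinned down, everything else follows by direct comparison: the $1$-skeletons of $\Lambda_R$ and $\Lambda_{CR}$ agree except on the paired sets $w\Lambda^E$ and $\Lambda^E w$, and the complete edge structure provides a canonical color-by-color bijection between them which the factorization rule promotes to the desired isomorphism.
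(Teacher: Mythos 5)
Your proposal follows the paper's own route almost exactly: verify that $H_{CR}$ gives full reducibility with $B=E$ (so $\E=\emptyset$ and $U_w^\E=\{w\}$), check the stationary condition against the complete-edge axioms, and then match the two $1$-skeletons by the color-by-color bijection $g_b\mapsto f_b$ between the surviving co-bridges of $\Lambda_R$ and the surviving bridges of $\Lambda_{CR}$, both realized as color-$b$ edges from $v$ to $x$. Your verification of the stationary condition is in fact more explicit than the paper's, which simply asserts that a complete edge is stationary; your matching of clause (ii) against axiom (3) and your observation that clause (iii) is automatic from $r(\nu)=w$ are both correct.

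The one step that does not hold as written is the final claim that the two parent functors ``send the new color-$b$ edge to the same composed path $g_b f_b$.'' Neither does: $\p_R$ appends a bridge of a \emph{single fixed} color $b_0$, so $\p_R(g_b)=g_b f_{b_0,w}$, while $\p_{CR}$ prepends a \emph{single fixed} co-bridge $g$, so $\p_{CR}(f_{b,w})=g\,f_{b,w}$; for $b\neq b_0$ neither of these is $g_b f_b$ (which would even have the wrong degree, $2b$). What is true, and what the paper checks, is that after the compatible choice $b_0=d(g)$ one has $\p_R(g_b)=g_b f_{d(g),w}\sim g\,f_{b,w}=\p_{CR}(f_{b,w})$, using the factorization property together with the complete-edge condition to identify the rearranged inner edge as the color-$b$ bridge and the outer edge as the unique color-$d(g)$ co-bridge. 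Since $\sim_R$ and $\sim_{CR}$ are each defined by pulling back $\sim$ along these parents, equivalence of the parents is exactly what makes the two factorization rules agree; without fixing the choices compatibly and running this computation, the intertwining of the equivalence relations is unjustified. This is a one-line repair, but it is the only nontrivial content of the isomorphism step, so it should be made explicit.
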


\begin{proof}
	Since $w\Lambda^E$ is a complete edge, by definition for each $e_i$ we have $w\Lambda^E=\{f_{e_i}\}$. Since $x=r(\Lambda^Ew)\neq w$, we quickly conclude that $w$ is reducible with color set $E$. Since $\E=\emptyset$, we know that $U_w^\E=\{w\}$ and thus we have that $w$ is fully reducible with color set $E$. We note that since $w\Lambda^E$ is a complete edge, $\{f_{b,w}:b\in B\}$ is a stationary set with color set $E$. 
	
	We may therefore construct $\Lambda_R$. For ease of notation, we will label $\Lambda^Ew=\{g_b:b\in B\}$. Notice first that $\Lambda_{CR}^0=\Lambda_R^0$ since $U_{w}^\E=\{w\}$. Compare $\Lambda_{CR}^E=\Lambda^E\setminus \Lambda^Ew$ and $\Lambda_R^E = \Lambda^E \setminus r^{-1}(w)$, and note that $r^{-1}(w)=\{f_{b,w}:b\in B\}$ and $\Lambda^Ew=\{g_b:b\in B\}$. We will associate $g_b\mapsto f_{b,w}$ and see that this yeilds a bijection between $\Lambda_{CR}^E$ and $\Lambda_R^E$. We need to check that $s_R, s_{CR}, r_R,$ and $r_{CR}$ all agree under this bijection.  Observe that  
	\[ s_R(g_b) = s(f_{b,w}) = s_{CR}(f_{b,w}) \qquad \text{ and } \qquad  r_R(g_b) = r(g_b) = x = r_{CR}(f_{b,w}). \]
	We now check that $\sim_R$ gives the same equivalence classes as $\sim_{CR}$. This amounts to checking that after fixing $g\in \Lambda^Ew$ for $\p_{CR}$ and $d(g)\in E$ for $\p_R$, we have $\p_R(g_b)= g_bf_{d(g),w}\sim g f_{b,w} = \p_{CR}(f_{b,w})$. We may now conclude that $\Lambda_{CR}\cong\Lambda_R$.
\end{proof}

To discuss the move Delay \textbf{(D)} and its relationship to \textbf{(R)}, we first present \cite[Definition 4.1]{efgggp}, with some notation changes. This definition is particularly technical. In essence, the goal is to choose an edge $f\in \Lambda^{e_1}$ and replace it with a path of length $2$, $f^2f^1$. However, as we saw with reduction, local changes like this have a global effect on the $k$-graph. To account for this, \textbf{(D)} adds edges $C^{e_i}_D$ for each $k\geq i>1$ and also delays the edges in the set $C^{e_1}$. To prime the reader for this definition, we will state the following proposition which will be proved after the definition.

\begin{prop}
\label{propdel}
    The pair $(\Lambda_D, v_f)$ satisfy $H_R$. In particular,
\begin{enumerate}[(I)]
    \item $r_D(f^1)=:v_f$ is reducible with color set $\{e_1\}$.
    \item $U_{v_f}^\E=\displaystyle{\bigcup_{i=2}^k C_D^{e_i}}$.
    \item $(U_{v_f}^\E)^0 = \{ v_g : g\in C^{e_1} \}$ and each of these are reducible with color set $\{e_1\}$.
    \item $v_g\Lambda^B=\{ g^1 \}$ and thus is stationary with color set $\{e_1\}$.
\end{enumerate}
\end{prop}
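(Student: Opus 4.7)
The plan is to trace through the Delay construction of $\Lambda_D$ near $v_f$ and verify each clause against Definitions \ref{red}, \ref{full-red-def}, and \ref{stat}. First I would catalog the local combinatorial structure produced by Delay: the original edge $f \in \Lambda^{e_1}$ is replaced by the length-two path $f^2 f^1$ passing through a new intermediate vertex $v_f$, and the same procedure is applied to every $g \in C^{e_1}$ yielding a new vertex $v_g$ and delayed factors $g^2 g^1$. To preserve the factorization property, auxiliary $e_i$-colored edges in $C_D^{e_i}$ ($i \geq 2$) are installed between the new $v_g$-vertices, but no pre-existing $\Lambda$-edges are incident to any $v_g$. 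In particular, the only $e_1$-edge with range $v_g$ is $g^1$ and the only $e_1$-edge with source $v_g$ is $g^2$, and $v_f$ is simply $v_g$ for $g = f \in C^{e_1}$.

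Next I would verify (I) and the reducibility half of (III) simultaneously, by checking the four conditions of Definition \ref{red} for each $v_g$ with color set $B = \{e_1\}$ and distinguished vertex $v := s(g)$. The identity $v_g \Lambda_D^{e_1} = \{g^1\}$ and the source identification $s_D(g^1) = s(g)$ come directly from the construction. The non-loop condition $v_g \notin r_D(\Lambda_D^{e_1} v_g)$ holds because the only $e_1$-edge with source $v_g$ is $g^2$, whose range $r(g)$ is a pre-existing vertex of $\Lambda^0$ and therefore distinct from the freshly introduced $v_g$. Finally, the implication $\mu \in v_g \Lambda_D^E s(g) \Rightarrow d(\mu) \in \{e_1\}$ holds because any edge into $v_g$ of color $e_i$ ($i \geq 2$) must lie in $C_D^{e_i}$ and, by the Delay construction, has source in the set of new vertices rather than at the old vertex $s(g)$. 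Specializing $g = f$ yields (I).

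For (II) together with the vertex identification in (III), I would determine the $\E$-connected component of $v_f$. Traversing an $e_i$-edge ($i \geq 2$) out of $v_f$ is only possible along an edge of $C_D^{e_i}$, since the $v_g$-vertices inherit no other $\E$-colored incidences, and each such traversal lands at another $v_{g'}$. Iterating produces exactly the subgraph $\bigcup_{i=2}^{k} C_D^{e_i}$ supported on $\{v_g : g \in C^{e_1}\}$, and no $\E$-edge escapes this collection into $\Lambda^0$ because the $v_g$-vertices have no other $\E$-adjacencies. This is precisely (II) and the vertex identification in (III).

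Finally, for (IV) together with the disjointness required by Definition \ref{full-red-def}, the bridge edges are $\{g^1 : g \in C^{e_1}\}$ and the co-bridge edges are $\{g^2 : g \in C^{e_1}\}$; these are disjoint as subsets of $\Lambda_D^{e_1}$ because $r_D(g^1) = v_g \in (U_{v_f}^\E)^0$ while $r_D(g^2) = r(g) \notin (U_{v_f}^\E)^0$. For stationarity of the singleton $\{g^1\}$ the first condition of Definition \ref{stat} is vacuous, and both factorization implications $\lambda g^1 \sim \nu \mu \Rightarrow \mu = g^1$ and $g^1 \lambda \sim \nu \mu \Rightarrow \nu = g^1$ follow from the uniqueness $v_g \Lambda_D^{e_1} = \{g^1\}$ together with Proposition \ref{permprop}, since any equivalent factorization placing an $e_1$-edge at range $v_g$ must place $g^1$ there. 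The main obstacle is confirming the structural catalog of the first paragraph directly from the technical definition of Delay in \cite[Definition 4.1]{efgggp}; once that bookkeeping is pinned down, every verification reduces to a direct check of the stated definitions.
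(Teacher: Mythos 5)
Your proposal is correct and follows essentially the same route as the paper: a direct verification of Definitions \ref{red}, \ref{full-red-def}, and \ref{stat} by reading off the local incidence structure that Delay creates at the new vertices $v_g$. You are in fact slightly more explicit than the paper in checking the non-loop condition and the bridge/co-bridge disjointness required for full reducibility, but the underlying argument is the same.
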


\begin{dfn}
\label{def:delay}
\textbf{Delay (D)\cite[Definition 4.1]{efgggp}:} Let $(\Lambda, d)$ be a $k$-graph and $G = (\Lambda^0, \Lambda^E, r, s)$ its underlying directed graph.  Fix $f \in \Lambda^E$; without loss of generality, assume $d(f) = e_1$.   Define the sets:
\begin{align*}
A_1 &= \{ f \} \cup  \{ g \in \Lambda^{e_1} : ag \sim fb \text{ or } ga \sim bf  \text{ for some } a,b \in \Lambda^{e_i} \text{ for }  2 \leq i \leq k   \}, \\
A_m &= \{ e \in \Lambda^{e_1} : ag \sim eb \text{ or } ga \sim be
 \text{ where } a,b \in \Lambda^{e_i} \text{ for }  2 \leq i \leq k , ~ g \in A_{m-1} \}, \\
C^{e_1} &= \bigcup_{j = 1}^{\infty} A_j  \subseteq \Lambda^{e_1}. 
\end{align*}
\noindent
Define $C^{e_i}:$ 
\[
C^{e_i} = \{ [ga] \in \Lambda : g \in C^{e_1}, a \in \Lambda^{e_i} \} . 
\]
From these sets in $\Lambda$ we define the sets to be added to form $\Lambda_D$. 

\[
C_D^{e_1} = \{ g^1, g^2 : g \in C^{e_1} \} 
\]

\[
C_D^{e_i} = \{ e_{\alpha} : \alpha \in C^{e_i} \}.
\]
Define the $k$-colored graph $G_D = (\Lambda_D^0, \Lambda_D^1, r_D, s_D)$ by
\begin{equation*}
\begin{split}
\Lambda_D^0 &= \Lambda^0 \cup \{ v_g \}_{g \in C^{e_1}},  
\quad \Lambda_D^{e_1} = (\Lambda^{e_1} \setminus C^{e_1}) \cup C_D^{e_1} \text{, with} \\ 
& \hspace{10mm} s_D(g^1) = s(g), s_D(g^2) = v_g, 
 \hspace{3mm} r_D(g^1) = v_g, r_D(g^2) = r(g) ; \\ 
\Lambda_D^{e_i}& = \Lambda^{e_i} \cup C_D^{e_i} \text{, with} \\ 
& \hspace{10mm} s_D(e_{\alpha}) = v_{g} \text{ such that } bg \text{ represents } \alpha \text{ and } d(g) = e_1, \\
& \hspace{10mm} r_D(e_{\alpha}) = v_{h} \text{ such that } ha \text{ represents } \alpha \text{ and } d(h) = e_1.
\end{split}
\end{equation*}

Let $\iota_D : G_D \to G$ be the partially defined inclusion map with domain $(\Lambda_D^0 \cup \Lambda_D^1) \setminus \left(\{\bigcup\limits_{i=1}^k C_D^{e_i}\} \cup \{v_e : e \in C^{e_1}\}\right)$. 
Then, for edges $ g\in \Lambda^1_D \backslash \displaystyle{\bigcup_{i=1}^k} C_D^{e_i}$, we can define
\begin{equation*}
s_D(g) = s(\iota_D(g)), ~r_D(g) = r(\iota(g)), ~d_D(g) = d(\iota(g)) .
\end{equation*}

Let $G^*_D$ be the path category for $G_D$ and define the equivalence relation $\sim_D$ on bi-color paths  $\mu = \mu_2 \mu_1  \in G_D^2$ according to the following rules. 
\\

Case 1: Assume $\mu_1, \mu_2 \notin \bigcup_{i=1}^k C_D^{e_i}$. Then we set $[\mu]_D = \iota^{-1}([\iota(\mu)])$.
\\

Case 2: Suppose  $\mu_j$ lies in $ C^{e_1}_D$, so that $\mu_j \in \{ g^1, g^2\}$ for some edge $g \in C^{e_1}$.  If $j=1$ and $\mu_1 = g^2$, then $r(\mu_1) = s(\mu_2) = \iota^{-1}( r(g)) \in \iota^{-1}(\Lambda^0)$, and the edges in $G_D$ with source in $\iota^{-1}(\Lambda^0)$ and degree $e_i$ for $i \not= 1$  are in $\iota^{-1}(\Lambda^1)$.  Therefore $\mu_2 \in \iota^{-1}(\Lambda^{e_i})$, and $\iota(\mu_2) g$ is a bi-color path in $G$, so $\iota(\mu_2) g \sim h a$ for edges $h \in C^{e_1}, a \in \Lambda^{e_i}$.  There is then an edge $e_{[\mu_2 g]} \in \Lambda^{e_i}_D$ with source $s(\mu_1) = v_g$ and range $v_h = s(h^2)$;  we define $\mu_2 \mu_1 =  \mu_2 g^2  \sim_D h^2 e_{[\mu_2 g]}$.
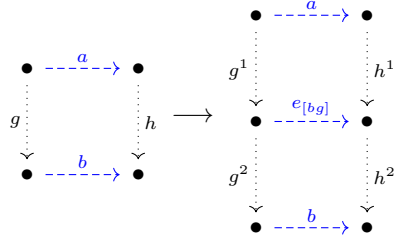
\begin{figure}[H]
\stepcounter{thm}
\begin{center}
\begin{tikzcd}[column sep=1cm,row sep=1cm]
\bullet \ar[r, dashed, blue, "a"] \ar[d, dotted, "g"'] & \bullet \ar[d, dotted, "h"]\\
\bullet \ar[r, dashed, blue, "b"] & \bullet
\end{tikzcd}
$\longrightarrow$
\begin{tikzcd}[column sep=1cm,row sep=1cm]
\bullet \ar[r, dashed, blue, "a"] \ar[d, dotted, "g^1"'] & \bullet \ar[d, dotted, "h^1"]\\
\bullet \ar[r, dashed, blue, "e_{[bg]}"] \ar[d, dotted, "g^2"'] & \bullet \ar[d, dotted, "h^2"]\\
\bullet \ar[r, dashed, blue, "b"] & \bullet
\end{tikzcd}
\end{center}
    \caption{A commuting square in $G$ and its ``children" in $G_D$, when $h,g \in C^{e_1}$.}
    \label{fig:Case2DelayDef}
\end{figure}
If $j =1$ and $\mu_1 = g^1$, the only edges in $G_D$ with source $r(g^1) = v_g $ and degree $e_i$ for $i \not=1$ are of the form $e_{[bg ]} = e_{[ha]}$ for some commuting square $bg \sim ha $ in $ \Lambda$.  In this case, we will have $h \in C^{e_1}$, and $r(h^1) = v_h = r(e_{[bg]})$, so we set $e_{[bg]} g^1 \sim_D  h^1 a$.

A similar argument shows that if $j=2$, the path $\mu_2 \mu_1$ will be of the form $h^1 a$ or $h^2 e_{[ha]}$, whose factorizations we have already described.
\\

Case 3: Assume $\mu$ is of the form $e_{\beta} e_{\alpha}$ for $\alpha \in C_D^{e_i}$, and $\beta \in C_D^{e_j}$ with $i \neq j$. Now $s_D(e_{\beta}) = r_D(e_{\alpha}) = v_g$ for some $g \in C^{e_1}$, and consequently $\alpha, \beta \in \Lambda$ are linked as shown on the left of Figure \ref{fig:Case3DelayDef}. Since $\Lambda$ is a $k$-graph, the 3-color path outlining $\beta \alpha$ generates a 3-cube in $\Lambda$, which is depicted on the right of Figure \ref{fig:Case3DelayDef}. 
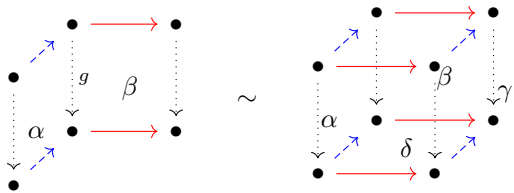
\begin{figure}[H]
\stepcounter{thm}
\begin{center}
\begin{tikzcd}[column sep=.3cm,row sep=.3cm]
	& \bullet \ar[rr, red] \ar[dd, black, dotted, "g"] & \ar[ddr,phantom,shift right=2ex, "\beta"]& \bullet \ar[dd, black, dotted]\\
	\bullet \ar[dd, black, dotted] \ar[dd,phantom, shift left =2ex, "\alpha"] \ar[ur, blue, dashed] & & & \\
	& \bullet \ar[rr, red] & & \bullet \\
	\bullet \ar[ru, blue, dashed] & &
\end{tikzcd} $~~~\sim ~~~$
\begin{tikzcd}[column sep=.3cm,row sep=.3cm]
	& \bullet \ar[rr, red] \ar[dd, black, dotted] &  \ar[ddr,phantom,shift right=2ex, "\beta"]& \bullet \ar[dd, black, dotted] \\
	\bullet \ar[dd, black, dotted] \ar[ru, blue, dashed] \ar[rr, red] \ar[dd, phantom, shift left=1ex, "\alpha"] &  & \bullet \ar[ru, blue, dashed]  \ar[dd, black, dotted] & \ar[d, phantom, shift left = 1ex, "\gamma"'] \\
	& \bullet \ar[rr, red] \ar[dr, phantom,  "\delta"] & & \bullet \\
	\bullet \ar[ru, blue, dashed] \ar[rr, red] & & \bullet \ar[ru, blue, dashed] & \\
\end{tikzcd}
    \caption{The commuting squares of edges from $\bigcup_{i=2}^k C_D^{e_i}$.}
    \label{fig:Case3DelayDef}
    
\end{center}
\end{figure}
Let $\delta$ and $\gamma$ denote the faces of this cube which lie, respectively, opposite $\beta$ and $\alpha$. Since $g \in C^{e_1}$,  all of the vertical edges of this cube are in $C^{e_1},$ and so $\delta \in C^{e_j}, \gamma \in C^{e_i}.$ Moreover, the path $e_{\gamma} e_{\delta}$ is composable in $\Lambda_D$, and has the same source and range as $e_{\beta} e_{\alpha}$.  Set $e_{\beta} e_{\alpha} \sim_D e_{\gamma} e_{\delta}$.

\end{dfn}

Before proving Proposition \ref{propdel} it is important to point out that \cite[Theorem 4.2, 4.3]{efgggp} showed that \textbf{(D)} is a well-defined, subtle,  higher-rank graph adjustment.

\begin{proof}[Proof of Proposition \ref{propdel}]
    In this proof, all edges and vertices are in $\Lambda_D$. So, for notation's sake we will write $s$ and $r$ instead of $s_D$ and $r_D$.
    \begin{enumerate}[(I)]
        \item This will follow directly from the definitions of $s_D$ and $r_D$ put forth above. Observe that $v_f \Lambda^E_D s(f) = \{f_1\}$ and this edge has color $e_1$ ensuring that the bridge edges are $f^1$ and the co-bridge edges are $f^2$.
        \item We first consider $\Lambda_D^\E$. This removes the edges in $C^{e_1}_D$. From the given definitions of $s_D$ and $r_D$, these are the only edges with source $v_g$, for some $g\in C^{e_1}$, and range in $\Lambda$. Further, every edge of the undirected connected component is contained in $C^{e_i}_D$ for some $1<i\leq k$, since the inductive definition is inherently undirected. This returns the desired equality. 
        \item Suppose that $g\in C^{e_1}$. By definition, there exists some edge $a$ with $d(a)\neq e_1$ such that $ga$ is a well-defined path. In particular,  $[ga]\in C^{d(a)}$. So $e_{[ga]}\in C^{d(a)}_D$ and further $r(e_{[ga]})=v_g$ and $s(e_{[ga]})=v_h$ for some $h\in C^{e_1}$. We conclude that $(U_{v_f}^\E)^0 = s\left( U_{v_f}^\E \right) \cup r\left( U_{v_f}^\E \right) = \{v_g: g\in C^{e_1}\}$. From here we need only notice that a given $v_g$ is reducible with color set $\{e_1\}$. In particular, $v_g$ has bridge edge $g^1$ and co-bridge edge $g^2$ allowing us to conclude full reducibility of $v_f$.
        \item This follows from the argument above. Further note from the definition of stationary that a singleton edge connecting $s(g)$ and $v_g$ is vacuously stationary. 
 
    \end{enumerate}
\end{proof}

With \textbf{(D)} it is clear that objects are added to the $k$-graph while \textbf{(R)} removes edges and vertices. This means that its relationship with \textbf{(R)} is clearly not a simple matter of $\Lambda_D\cong\Lambda_R$. Instead we recall that in \cite{compclass} the sequences of graph moves between $E$ and $F$ can utilize these moves or their inverses (i.e. the path between $E$ and $F$ is possibly undirected). This means that if we show $(\Lambda_D)_R \cong \Lambda$, then we may conclude that the Morita equivalence between $C^*(\Lambda_D) \sim_{ME} C^*(\Lambda)$ can be thought of as a direct consequence of the fact that $C^*((\Lambda_D)_R) \sim_{ME} C^*(\Lambda_D)$. To this end we present the following proposition.

\begin{prop}
	For any $k$-graph, $\Lambda$, if $\Lambda_D$ is obtained by delaying $f\in \Lambda^{e_1}$ and reduction is performed on $(\Lambda_D, v_f)$ to obtain $(\Lambda_{D})_R$, we have $(\Lambda_D)_R \cong \Lambda$.
\end{prop}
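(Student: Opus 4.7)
The plan is to construct an explicit isomorphism $\phi \co (\Lambda_D)_R \to \Lambda$ at the level of $1$-skeletons and verify that it respects the factorization relation.

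First, Proposition \ref{propdel} gives $(U_{v_f}^\E)^0 = \{v_g : g \in C^{e_1}\}$, $(U_{v_f}^\E)^E = \bigcup_{i=2}^k C_D^{e_i}$, and bridge edges $\{g^1 : g \in C^{e_1}\}$ at each $v_g$. Applying Definition \ref{reduction} then yields $(\Lambda_D)_R^0 = \Lambda^0$ and
\[ (\Lambda_D)_R^E = (\Lambda^{e_1}\setminus C^{e_1}) \sqcup \{g^2 : g \in C^{e_1}\} \sqcup \bigsqcup_{i > 1} \Lambda^{e_i}. \]
I define $\phi$ on $1$-skeletons as the identity on vertices, as $\iota_D$ on edges in $\Lambda^E \cap \Lambda_D^E$, and by $\phi(g^2) = g$ for each $g \in C^{e_1}$. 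The computations $r_R(g^2) = r_D(g^2) = r(g)$ and $s_R(g^2) = s_D(g^1) = s(g)$ show $\phi$ is a source- and range-preserving bijection.

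Next I verify $\phi$ preserves the factorization relation, using the characterization $\mu \sim_R \nu$ iff $\p_R(\mu) \sim_D \p_R(\nu)$ from Lemma \ref{1-skel-realiz}. Bi-color composable paths split into two types. Paths entirely in $\iota_D(\Lambda^E)$ have $\p_R$ acting as $\iota_D$, so their $\sim_R$-classes are $\iota_D^{-1}$ of the Case 1 classes of Definition \ref{def:delay}, which correspond under $\phi$ to $\sim_\Lambda$-classes. The substantive case is $g^2 a$ with $g \in C^{e_1}$, $a \in \Lambda^{e_i}$ ($i > 1$), and $r(a) = s(g)$. Here $\p_R(g^2 a) = g^2 g^1 a$, and applying Case 2 of Definition \ref{def:delay} to $g^1 a$ yields $g^1 a \sim_D e_{[bg']} g'^1$, where $ga \sim_\Lambda bg'$ is the commuting square in $\Lambda$, with $g' \in C^{e_1}$ and $b \in \Lambda^{e_i}$. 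A second application of Case 2 gives $g^2 e_{[bg']} \sim_D b g'^2$. Chaining, $\p_R(g^2 a) \sim_D b g'^2 g'^1 = \p_R(b g'^2)$, so $g^2 a \sim_R b g'^2$ in $(\Lambda_D)_R$, and $\phi$ sends this to $ga \sim_\Lambda bg'$. The case $ag^2$ is symmetric, and since every commuting square in $\Lambda$ arises this way, the correspondence of equivalence classes goes in both directions.

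The main obstacle is the case-by-case bookkeeping of the bridge edges $g^1$ produced by $\p_R$ through the double application of Case 2 of Definition \ref{def:delay}; once this is done cleanly, bijectivity on commuting squares is immediate. With $\phi$ verified to be a $1$-skeleton bijection preserving factorization in both directions, Remark \ref{homrem} together with the uniqueness of the $k$-graph recovered from its $1$-skeleton (Proposition \ref{kg-rel-prop}) yields the desired isomorphism $(\Lambda_D)_R \cong \Lambda$.
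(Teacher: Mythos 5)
Your proposal is correct and follows essentially the same route as the paper: both identify $(\Lambda_D)_R^0=\Lambda^0$, set up the bijection $g^2\mapsto g$ (identity elsewhere via $\iota_D$), and verify the factorization correspondence by unpacking Case 1 and Case 2 of Definition \ref{def:delay} through the parent function $\p_R$, whose image on the new edges is $\Theta=\{g^2g^1 : g\in C^{e_1}\}$. Your chaining of the two Case 2 applications ($g^2g^1a\sim_D g^2 e_{[bg']}g'^1\sim_D bg'^2g'^1$) just spells out more explicitly the step the paper compresses into $[\iota_R(\mu)g^2g^1]_D=[\iota_D(\iota_R(\mu_2))g]$.
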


\begin{proof}
    From Proposition \ref{propdel} we know that $(\Lambda_D)_R$ is well-defined. Further we may note that 
    
    \begin{align*}
        (\Lambda_D)_R^0 &= \Lambda_D^0 \setminus \{v_g: g\in C^{e_1}\} = (\Lambda^0\cup\{v_g: g\in C^{e_1}\}) \setminus \{v_g: g\in C^{e_1}\} = \Lambda^0\\
        (\Lambda_D)_R^E &= \Lambda_D^E \setminus ((U_{v_f}^\E)^E\cup\{g^1:g\in C^{e_1}\}) = \Lambda_D^E \setminus \left(\left( \displaystyle{\bigcup_{i=2}^k C_D^{e_i}} \right)\cup\{g^1:g\in C^{e_1}\}\right)\\
        & = (\Lambda^E \setminus \{g\in C^{e_1}\})\cup \{g^2: g\in C^{e_1}\}
    \end{align*} 
    
    At first it is worrying that we do not recover $\Lambda^E$ exactly, but this can be easily remedied by the identification $g^2\mapsto g$. This is the right thing to do, because $r_R(g^2)=r_D(\iota_R(g^2)) = r_D(g^2)=r(g)$. Recall also that $g^2\in (\Lambda_{D})_R$ is such that $s_D(\iota_R(g^2))=v_g$ and thus $s_{R}(g^2)=s_D(g^2g^1)=s(g)$. 
    
    Being more precise, there is a bijective functor between the path categories $\phi: G((\Lambda_D)_R)^*\to G(\Lambda)^*$ such that for all $g^2\in C^{e_1}_D$ we have $\phi(\iota_R^{-1}(g^2))= g$ and, for the remaining edges, $\mu\mapsto\iota_D(\iota_R(\mu))$. It remains to show that this identification respects the relations $\sim_{DR}$ and $\sim$. That is, given $\mu\sim_{DR}\nu$ then $\phi(\mu)\sim \phi(\nu)$. This proof relies primarily on unpacking the definition of $\sim_{DR}$ and observing that $\phi$ acts this way by construction. 
    
    Using the notation from Remark \ref{image}, we may observe that $\Theta = \{ g^2g^1 : g\in C^{e_1} \}$. In particular this means that 
    
    \[ \p(\mu)=\begin{cases} g^2g^1, & \iota_R(\mu)=g^2 \text{ for some $g\in C^{e_1}$}\\ \iota_R(\mu), & \text{otherwise.} \end{cases} \]
    
    Let's consider some path of length $2$, $\mu =\mu_2\mu_1$, in $(\Lambda_{D})_R$ and observe that $[\mu]_{DR} = [\p(\mu)]_D$. We will investigate cases based on whether $\p(\mu)\in \Xi^*$ or $\p(\mu)\notin \Xi^*$.
    
    Suppose that $\p(\mu)\in \Xi^*$. Then $\p(\mu)=\iota_R(\mu)$. Further, since $\p(\mu)\in \Xi^*$, we conclude that $s_D(\iota_R(\mu_1)), s_D(\iota_R(\mu_2)) \notin \{ v_g: g\in C^{e_1}\}$ and thus we fall under \textit{Case 1} for $\sim_D$. We conclude that:
    \[ [\mu]_{DR} = [\p(\mu)]_D = [\iota_R(\mu)]_D = [\iota_D(\iota_R(\mu))]=[\phi(\mu)]. \]
	
	Now suppose that either $\p(\mu_1)\in \Theta$ or $\p(\mu_2)\in \Theta$. We will closely examine the former, and note that a similar argument holds for the latter. If $\p(\mu_1)\in \Theta$ then $\mu_1=\iota^{-1}_R(g^2)$ for some $g\in C^{e_1}$ and $\p(\mu_2)=\iota_R(\mu_2)$. So, $\p(\mu)=g^2g^1\iota_R(\mu_2)$. By the construction of \textit{Case 2} we know that
	
	$$[\mu_2\mu_1]_{DR}=[\iota_R(\mu) g^2g^1]_D = [\iota_D(\iota_R(\mu_2)) g] = [\phi(\mu_2) \phi(\mu_1))] .$$
	
	We conclude that $\phi$ induces an isomorphism of $k$-graphs $\Lambda\cong (\Lambda_{D})_R$.
	
\end{proof}

From the above results we see that the moves \textbf{(CR)} and \textbf{(D)} from \cite{efgggp} are special cases of the move \textbf{(R)} developed in this paper. This allows us to update the list of necessary Morita equivalence preserving $k$-graph moves. 
\begin{multicols}{2}

\begin{itemize}
    \item Insplitting \textbf{(I)} \cite[\S 3]{efgggp}
    \item Outsplitting \textbf{(O)} \cite[\S 4]{outsplit}
\end{itemize}
\begin{itemize}
    \item Sink Deletion \textbf{(S)} \cite[\S 6]{efgggp}
    \item Reduction \textbf{(R)}
\end{itemize}
\end{multicols}

\bibliography{The}{}
\bibliographystyle{alpha}

\end{document}